\newcommand{\red}[1]{\textcolor{red}{#1}}
\numberwithin{equation}{subsection}
\newtheorem{theorem}{Theorem}[section]
\newtheorem{lemma}[theorem]{Lemma}
\newtheorem{proposition}[theorem]{Proposition}
\newtheorem{corollary}[theorem]{Corollary}
\theoremstyle{definition}
\newtheorem{defn}[theorem]{Definition}
\theoremstyle{remark}
\newtheorem{rem}[theorem]{Remark}
\newtheorem{example}[theorem]{Example}
\newcommand{\A}{\mathbb{A}}
\renewcommand{\C}{\mathbb{C}}
\newcommand{\E}{\mathbb{E}}
\newcommand{\G}{\mathbb{G}}
\newcommand{\Hh}{\mathbb{H}}
\newcommand{\Q}{\mathbb{Q}}
\newcommand{\R}{\mathbb{R}}
\newcommand{\Z}{\mathbb{Z}}
\newcommand{\ol}[1]{\overline{#1}}
\newcommand{\bbH}{\mathbf{H}}
\newcommand{\bbR}{\mathbf{R}}
\newcommand{\cF}{\mathcal{F}}
\newcommand{\cO}{\mathcal{O}}
\newcommand{\dR}{_{\mathrm{dR}}}
\newcommand{\ra}{\rightarrow}
\newcommand{\an}{{\mathrm{an}}}
\DeclareMathOperator{\GL}{GL}
\DeclareMathOperator{\SL}{SL}
\DeclareMathOperator{\Sp}{Sp}
\DeclareMathOperator{\Sym}{Sym}
\DeclareMathOperator{\End}{End}
\DeclareMathOperator{\Spec}{Spec}
\DeclareMathOperator{\Id}{Id}
\newcommand{\ds}{\displaystyle}
\def\trdeg{\operatorname{trdeg}}
\def\G{\mathbf{G}}
\def\AA{\mathbb{A}}
\def\Def{\mathrm{def}}
\def\Zar{\mathrm{Zar}}
\def\img{\operatorname{img}}
\def\GLGL{\mathbf{GL}}
\renewcommand{\bar}[1]{\overline{#1}}
\begin{document}
\title{Periods in Families and Derivatives of Period Maps}
\author{Benjamin Bakker, Jonathan Pila and Jacob Tsimerman}

\date{}

\maketitle

\begin{abstract}

    Given a smooth proper family $\phi:X\ra S$, we study the (quasi)-periods of the fibers of $\phi$ as (germs of) functions on $S$. We show that they field they generate has the same algebraic closure as that given by the flag variety co-ordinates parametrizing the corresponding Hodge filtration, together with their derivatives. Moreover, in the more general context of an arbitrary flat vector bundle, we determine the transcendence degree of the function field generated by the flat coordinates of algebraic sections. Our results are inspired by and generalize work of Bertrand--Zudilin.
    
\end{abstract}

  \tableofcontents

\section{Introduction}

\subsection{The cohomology groups of algebraic varieties}  

\subsubsection{Periods} Let $X$ be a smooth complex algebraic variety.  The singular cohomology groups $H^k(X^\an,\C)$ of the associated complex analytic variety $X^\an $ with its euclidean topology are topological invariants which come equipped with several algebraically defined structures.

By a theorem of Grothendieck, $H^k(X^\an,\C)$ can be computed using the de Rham cohomology of algebraic forms \cite{dmodules}.  This group is somewhat complicated in general, but if $X$ is affine then $H^k(X^\an,\C)$ is isomorphic to the group of closed algebraic $k$-forms modulo exact algebraic $k$-forms.  The isomorphism is by integration:  it associates to the class $[\alpha]$ of a closed algebraic $k$-form the integration map $\gamma\mapsto \int_\gamma \alpha$ on $k$-cycles.  The integrals $\int_\gamma\alpha$ are called periods.  When $X$ is defined over a subfield $L\subset \C$ (for example a number field) and $\alpha$ is taken to be $L$-rational, the periods are in general highly transcendental numbers, and conjecturally algebraic relations between them come from geometry (see for example \cite{AndreGroth} and the references therein).

\subsubsection{Hodge filtrations}
Algebraic de Rham cohomology also endows $H^k(X^\an,\C)$ with an algebraically defined filtration $F^\bullet$---the Hodge filtration---which detects geometry in a slightly different way, via the Hodge conjecture.  For $X$ a smooth projective variety, $F^p$ is given by the smooth de Rham cohomology classes which can be represented by a type $(r,s)$ form with $r\geq p$, and the Hodge conjecture asserts that rational cohomology classes $\alpha\in H^{2k}(X^\an,\Q)$ contained in $F^k$ should come from geometry.

In this paper we are concerned with the relationship between the periods and the Hodge filtration in algebraic families, specifically with regard to their transcendence properties.
\subsection{Periods in families} 
Suppose $f:X\to S$ is a smooth algebraic family over a smooth affine base $S$ which is real analytically locall trivial. Assume for simplicity that $X$ is affine.  Then we can take a fiberwise closed algebraic $k$-form $\alpha$, a point $s\in S$, a class $\gamma\in H_k(X_s^\an,\C)$, and a contractible neighborhood $s\in U
\subset S^\an$, and form an analytic function $U\to \C$ given by $u\mapsto \int_{\gamma(u)} \alpha|_{X_u}$ where $\gamma(u)\in H_k(X_u^\an,\C)$ is the cycle obtained from $\gamma$ by a local real analytic trivialization of $f$.  In the general case, these functions make sense for $\alpha$ an algebraic de Rham cohomology class (see for example \cite{huber}), and we define them to be the periods of $\alpha$ at $s$.  Once again the period functions are highly transcendental, but in this case algebraic relations can be proven to come from geometry in an appropriate sense, see \cite{AndreGroth}.

\subsection{Period maps}
See \cite{hodge1} for background on period maps. 
 The cohomology groups of fibers $H^k(X_t^\an,\C)$ can be canonically identified locally on $S^\an$ using a real analytic trivialization.  With respect to these local identifications, the Hodge filtration $F^\bullet_t$ on $H^k(X_t^\an,\C)$ varies in a holomorphic way.  By analytically continuing these identifications and using it to transport $F^\bullet_t$ to $H^k(X^\an_s,\C)$, we obtain a holomorphic map $\phi:\tilde X\to\check D$ where $\tilde X\to X^\an$ is a universal cover of $X^\an$ and $\check D$ is a flag variety parameterizing filtrations of the given type on $H^k(X_s^\an,\C)$.  If $f$ is quasiprojective, then in fact $\phi$ lands in a semialgebraic subset of an algebraic subvariety, namely the mixed period domain---see \cite{PSmix} for details.  Such a $\phi$ is called the period map associated to the natural variation of Hodge structures on $R^k(f^\an)_*\C_{X^\an}$.

If $X$ is once again affine, then around the basepoint $s$ we may take forms $\alpha_i$ which can be refined to a basis of each $F^p$; in general, the same can be done for algebraic de Rham cohomology.  Taking a lift of the neighborhood $U$ to $\tilde X$, the algebraic coordinates of $\phi$ restricted to $U$ are then rational functions in the ratios of certain minors of the matrix of periods $\int_{\gamma_j(u)}\alpha_i|_{X_u}$ with respect to a chosen basis of $\gamma_j$ of $H_k(X_s^\an,\C)$.  We call such functions the Hodge filtration coordinates at $s$.

\subsection{Comparison} Clearly the Hodge filtration coordinates are rational functions in the period, but a priori contain much less information.  For example, the field generated over the rational function field $K(S)$ by the periods is in fact closed under algebraic derivations - since the Lie derivative along an algebraic vector field is algebraic - whereas the field generated by the Hodge filtration coordinates are usually not differentially closed. On the other hand, the Hodge filtration coordinates are often more accessible. 

\subsection{Main results}

We now state a simplified version of our main theorem (Theorem \ref{KV in Kphi}), which clarifies the relationship between the fields generated by the periods and the Hodge filtration coordinates.
\begin{theorem}\label{thm:intro main}
Let $f:X\ra S$ be a smooth algebraic family for which $R^k(f^\an)_*\C_{X^\an}$ is a local system and let $\phi:\tilde X\to \check{D}$ be the period map associated to the natural variation of Hodge structures on $R^k(f^\an)_*\C_{X^\an}$.  Let $s\in S$ be a basepoint and consider the following two subfields of $K(\cO_{S^\an,s})$, the field of germs of meromorphic functions at $s$.
\begin{enumerate}
\item Let $K(R^k(f^\an)_*\C_{X^\an})_s$ be the subfield generated over $K(S)$ by the periods of degree $k$ algebraic de Rham cohomology classes at $s$.  
\item Let $K(\phi)_s$ to be the subfield generated over $K(S)$ by the Hodge filtration coordinates at $s$ and let $K_\partial (\phi)_s$ to be closure of $K(\phi)_s$ under algebraic derivations. 
\end{enumerate}
Then 

\begin{enumerate}[label=\alph*)]

    \item  $K_\partial(\phi)_s\subset K(R^k(f^\an)_*\C_{X^\an})_s$.
    \item Assume that the local system $R^k(f^\an)_*\C_{X^\an}$ does not admit a summand of the form $V_1\otimes V_2$, where $V_2$ is a unitary local system with infinite monodromy. Then the algebraic closures coincide: $\ol{K_\partial(\phi)_s}= \ol{K(R^k(f^\an)_*\C_{X^\an})_s}$.
\end{enumerate}
\end{theorem}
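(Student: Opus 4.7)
For part (a), my approach is to observe that the period field $K(R^k(f^\an)_*\C_{X^\an})_s$ is already closed under algebraic derivations on $S$, and that it visibly contains $K(\phi)_s$. The latter is clear since, as recalled in the excerpt, the Hodge filtration coordinates are rational functions in ratios of minors of the period matrix. For the closure under derivation, if $\partial$ is an algebraic vector field on $S$ and $p(u)=\int_{\gamma(u)}\alpha|_{X_u}$ is a period, then $\partial p = \int_{\gamma}\nabla_\partial\alpha$, where $\nabla$ is the algebraic Gauss--Manin connection. Hence $\partial p$ is a period of another algebraic de Rham class, and iterating along a basis of algebraic vector fields gives $K_\partial(\phi)_s\subset K(R^k(f^\an)_*\C_{X^\an})_s$.

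For part (b), the substantive statement is that periods are algebraic over $K_\partial(\phi)_s$. My plan is to attack this via differential Galois theory. The period matrix is a fundamental solution of the linear ODE system $\nabla v=0$ coming from Gauss--Manin. Written in the algebraic frame that records the Hodge filtration, the connection matrix has entries that are rational functions in the Hodge filtration coordinates and their first derivatives; in particular its entries lie in $K_\partial(\phi)_s$. Therefore $\trdeg\bigl(K(R^k(f^\an)_*\C_{X^\an})_s/K_\partial(\phi)_s\bigr)$ equals the dimension of the Picard--Vessiot group $\mathbf{G}$ of this system over $K_\partial(\phi)_s$, and our task is to show $\mathbf{G}$ is finite.

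The key step will be to identify $\mathbf{G}$ with an explicit algebraic subgroup of the connected algebraic monodromy group $\mathbf{H}$ of the VHS. Over $K(S)$ the Picard--Vessiot group is (a form of) $\mathbf{H}$ itself, by the differential-Galois/monodromy correspondence. Enlarging the base field from $K(S)$ to $K_\partial(\phi)_s$ forces the Galois group to preserve the period map $\phi$ and all its iterated derivatives, so $\mathbf{G}$ is the subgroup of $\mathbf{H}$ that fixes every Hodge filtration in the (Zariski closure of the) $\mathbf{H}$-orbit of $\phi(s)$. Using Deligne's semisimplicity of polarizable variations, such a stabilizer corresponds to a direct summand of the ambient VHS on which $\mathbf{H}$ acts while preserving the Hodge filtration pointwise. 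A summand on which $\mathbf{H}$ acts Hodge-trivially must be unitary, and the orbit of $\mathbf{G}$ inside it coincides with the monodromy of this unitary factor; the hypothesis excluding a $V_1\otimes V_2$ summand with $V_2$ unitary of infinite monodromy then forces $\mathbf{G}$ to have finite connected component, i.e.\ $\dim \mathbf{G}=0$. Combined with (a), this gives the equality of algebraic closures.

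The main obstacle I expect is the last step: the Hodge-theoretic identification of the stabilizer subgroup $\mathbf{G}\subset\mathbf{H}$ with the monodromy of a unitary direct summand, and the verification that a nontrivial such $\mathbf{G}$ produces a tensor-product summand of the precise form $V_1\otimes V_2$ excluded by the hypothesis. This likely requires an application of an Ax--Schanuel-type statement for VHS in order to upgrade the naive dimension bound on the $\mathbf{H}$-orbit of $\phi$ to the exact equality that pins down $\mathbf{G}$, together with a representation-theoretic argument decomposing $R^k(f^\an)_*\C_{X^\an}$ according to the action of $\mathbf{G}$ inside $\mathbf{H}$.
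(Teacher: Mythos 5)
Part (a) of your proposal is correct and is essentially the paper's argument: $K(V)_s$ for $V=R^k(f^\an)_*\C_{X^\an}$ is closed under algebraic derivations because the Gauss--Manin connection is algebraic (\cref{basic RH}(1)), and it contains the Hodge filtration coordinates since they are rational functions of the period matrix.

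For part (b) you take a genuinely different route. The paper deduces the statement from a general theorem about definable $\pi_1$-equivariant maps to varieties with a $\G$-action with finite kernel (\Cref{KV in Kphi}), proved with definable GAGA/Chow; you instead use Picard--Vessiot theory: $K(V)_s$ is a Picard--Vessiot extension of $K(S)$ (no new constants inside germs of meromorphic functions, regular singularities of Gauss--Manin), its Galois group is the algebraic monodromy group $\mathbf{H}$, and $K_\partial(\phi)_s$ is an intermediate differential subfield by part (a), so $\trdeg\bigl(K(V)_s/K_\partial(\phi)_s\bigr)=\dim\mathbf{G}$ where $\mathbf{G}$ is the subgroup of $\mathbf{H}$ fixing $K_\partial(\phi)_s$. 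This scaffolding is legitimate for this particular theorem (the paper itself cites Andr\'e for $\trdeg_{K(S)}K(V)=\dim\mathbf{H}$ as a consequence of differential Galois theory), and $\mathbf{G}$ is indeed the pointwise stabilizer in $\mathbf{H}$ of $(\img\phi)^\Zar$, which is the weak Mumford--Tate domain, i.e.\ the full $\mathbf{H}$-orbit of $\phi(\tilde s)$. Note, however, that no Ax--Schanuel input is needed anywhere: the Galois correspondence already yields the exact transcendence degree, and the identification of $(\img\phi)^\Zar$ with the $\mathbf{H}$-orbit follows from equivariance together with the theorem of the fixed part. Your appeal to Ax--Schanuel signals a misdiagnosis of where the difficulty sits. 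Two smaller issues: the claim that the connection matrix ``in the Hodge frame'' has entries in $K_\partial(\phi)_s$ is unnecessary (the connection is already defined over $K(S)$) and not obviously true; and for non-proper families the variation is mixed, so Deligne semisimplicity is not directly available as you invoke it.

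The genuine gap is precisely the step you defer as ``the main obstacle'': proving that the kernel $\mathbf{G}$ of the $\mathbf{H}$-action on the weak Mumford--Tate domain is finite unless $V$ contains a summand $V_1\otimes V_2$ with $V_2$ unitary of infinite monodromy. All of the content of the hypothesis enters here, and your sketch does not establish it. The paper's argument runs as follows: the stabilizer of a point of $\check D$ is $\exp(F^0\mathfrak{g})$, so the Lie algebra $\mathfrak{h}$ of the kernel is an ideal of $\mathfrak{g}=\Lie\mathbf{H}$ contained in $F^0\mathfrak{g}$; it is a sub-Hodge structure of $\End(V)$ defined over $\R$, whence $\mathfrak{h}\cap W_{-1}\mathfrak{g}=0$, and being pure of weight $0$ and contained in $F^0$ it is Hodge--Tate, hence unitary; semisimplicity of $\operatorname{gr}^W_0\mathfrak{g}$ gives a splitting $\mathfrak{g}=\mathfrak{h}\oplus\mathfrak{g}'$ of Lie algebras and Hodge structures; finally \cref{lem:tensor} (an indecomposable representation of a product with one semisimple factor is a tensor product of representations of the factors) converts a nontrivial $\mathfrak{h}$ into exactly the forbidden summand $V_1\otimes V_2$. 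Without this chain of Hodge-theoretic and representation-theoretic steps your proof of (b) does not close.
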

 \begin{rem}
    Condition (b) above is also necessary. Indeed, if $V$ is a complex variation of Hodge structures underlying a unitary local system, then the corresponding period map is constant, so the field $K_\partial(\phi)_s$ will be trivial. However, if the local system is non-trivial, then $V$ will give transcendental periods (see \S2). 
 \end{rem}

 \begin{rem}
     Condition (b), and therefore the equality of fields, holds for Shimura varieties corresponding to reductive groups without compact real factors.
 \end{rem}

The main theorem (see Theorem \ref{KV in Kphi}) generalizes Theorem \ref{thm:intro main} in several ways:
\begin{enumerate}
\item The field $K(R^k(f^\an)_*\C_{X^\an})$ may be replaced with $K(V)$ where $V$ is any complex local system on $S^\an$ with norm one eigenvalues at infinity.  Briefly, the field $K(V)$ is generated over $K(S)$ by the germs of the flat coordinates of algebraic sections of the canonical algebraic structure on the flat vector bundle $(\cO_{S^\an}\otimes_{\C_{S^\an}}V,\nabla)$ given by the Riemann--Hilbert correspondence.  This algebraic structure is uniquely associated to $V$, and in the case $V= R^k(f^\an)_*\C_{X^\an}$ recovers relative algebraic de Rham cohomology, which is why the notation only references the local system.  
\item The map $\phi$ may be replaced by any $\R_{\an,\exp}$-definable $\pi_1(X^\an,x)$-equivariant map $\phi:\tilde X\to Y^\an$, where $x\in X$ is a basepoint, $Y$ is an algebraic variety with a faithful action by a linear algebraic group $\G$ and the $\pi_1(X^\an,x)$-action on $Y$ is via a homomorphism $\rho :\pi_1(X^\an,x)\to\G(\C)$.
\end{enumerate}
In particular, the theorem applies more generally to any local system $V$ underlying a \emph{mixed} variation of \emph{complex} Hodge structures.

\subsection{Previous work}The paper is largely inspired by the work of Bertrand--Zudilin \cite{BZ}, who prove several results on the transcendence degree of fields generated by Siegel modular forms and their derivatives.  The moduli space of principally polarized abelian varieties can be identified with the quotient $A_g^\an=\Sp_{2g}(\Z)\backslash\mathbb{H}_g$, and Siegel modular forms can be interpreted as the algebraic coordinates on $A_g$ composed with the uniformization $\pi:\mathbb{H}_g\to A_g^\an$.  As such, they are the inverse functions to the Hodge filtration coordinates of the universal family of abelian varieties.  Theorem \ref{thm:intro main} then recovers the second part of \cite[Theorem 1]{BZ}.  Note that in the general context the inverse functions of the the Hodge filtration coordinates are not well-defined, since the period map will only be a local isomorphism in very special circumstances. 

\subsection{Outline of Paper}

In \S2, using the Riemann-Hilbert correspondence we define a Riemann-Hilbert field for an arbitrary local system which generalizes the notion of periods. In \S3 we relate definable meromorphic functions with finite-dimensional monodromy to the Riemann-Hilbert field of the corresponding local system, and we use this to prove the general form of our main theorem. We rely heavily on the Chow theorem of Peterzil--Starchenko \cite{definechow} and its generalization in \cite{bbt}. 
In \S4.1 we return to studying periods of algebraic families and prove our main result. We then work out in detail in \S4.2 the fields under question in the case of the universal elliptic curve. In \S4.3 we explain that even in the general case, periods of algebraic De-Rham cohomology classes can be thought of as integrals of meromorphic forms.

\section{Riemann--Hilbert fields of local systems}
Let $X$ be a smooth connected complex algebraic variety. Let $V$ be a complex local system on $X$. The associated locally free $\cO_{X^\an}$ with flat connection $(V_{\cO_{X^\an}}, \nabla)$ where $V_{\cO_{X^\an}}:=\cO_{X^\an}\otimes_{\C_X}V$ admits a canonical structure $(V_{\cO_X},\nabla)$ of a locally free $\cO_X$-module with flat connection with regular singularities by the Riemann--Hilbert correspondence \cite{deligneRH}.  We associate a field $K(V)$ generated by the components of algebraic sections of $V_{\cO_X}$ with respect to a flat basis as follows:

\begin{defn} Given a local system $V$ on $X$, for any point $x\in X$ we have a canonical evaluation $\langle\;,\;\rangle:V_{\cO,x}\otimes_\C V_x^\vee\to\cO_{X^\an,x}$ of $\cO_{X,x}$-modules, and we define $K(V)_x$ to be the subfield of the fraction field $K(\cO_{X^\an,x})$ generated by the image. 
\end{defn}

Concretely, $K(V)_x$ is the field generated by (germs of) the entries of the change of basis matrix between flat and algebraic frames of $V_{\cO_{X^\an}}$, and we refer to it as the Riemann--Hilbert field of $V$.  Note that a choice of (homotopy class of) path from $x$ to $y$ gives an isomorphism $K(V)_x\ra K(V)_y$ by analytic continuation.

We have the following basepoint-free definition of $K(V)_x$ given a universal cover:
\begin{enumerate}
    \item Let $V_{K(X)}$ be the $K(X)$-module of rational sections of $V_{\cO_X}$.  Let $\pi:\tilde X\to X^\an$ be a universal cover of $X^\an$ and $\tilde x$ a lift of $x$.  Then using the connection we obtain a canonical $\pi_1(X^\an,x)$-equivariant trivialization $\pi^*V^\vee\cong  \C_{\tilde X}\otimes_\C V_x^\vee$ and therefore a $\pi_1(X^\an,x)$-equivariant homomorphism $V_{K(X)}\otimes_\C V_x^\vee\to K(\tilde X)$ whose image is independent of $x$ (and $\tilde x$) and which is naturally isomorphic to $K(V)_x$ via pullback. We refer to this image as $K(V)\subset K(\tilde X)$; we suppress the dependence on $\tilde X$ as any isomorphism with any other universal cover respects the subfield $K(V)$.  Observe that $K(V)$ is stable under the action of $\pi_1(X^\an,x)$, and the action of $\pi_1(X^\an,x)$ on $K(V)$ factors through the monodromy representation of $V$.
   
    \item Let $\mathbb{E}:=\AA(V_{\cO_X}^\vee\otimes_\C V_x)$ be the geometric total space of the locally free sheaf $V_{\cO_X}^\vee\otimes_\C V_x$.  Concretely, $\mathbb{E}$ is the vector bundle of homomorphisms $V_y\to V_x$ as $y\in X$ in varies.  The sheaf $\mathbb{E}$ has a natural flat connection and there is a natural holomorphic map $T:\tilde X\to \mathbb{E}^\an$ sending $\tilde x$ to the identity which parameterizes the flat leaf through the identity.  In other words, thinking of $\tilde y\in \tilde X$ as a homotopy class of path from $y=\pi(\tilde y)\in X$ to $x$, $T_{\tilde y}:V_y\to V_{x}$ is the the parallel transport operator thought of as an element of $V_y^\vee\otimes V_x$. Note that the map $T$ depends on $\tilde{x}$, but the image of $T$ does not. 

    In coordinates the map $T$ is given as follows.  Let $t_i$ be a basis of $V_x$, $\tilde t_i$ the flat continuation of the $t_i$, $s_i$ a basis for $V_{K(X)}$.  Then at least over the open set of $X$ on which the sections $s_i$ are regular, with respect to the basis $s_i(y)$ on $V_y$ and $t_i$ on $V_x$ the matrix of $T_{\tilde{y}}$ has entries $\tilde t_j^\vee(\pi^*s_i)(\tilde y)$.  Thus, there is an natural isomorphism from $K((\img T)^\Zar)$ to $K(V)$ by pulling back along $T$, and using the identification from (1).
    
\end{enumerate}
In particular, we have the following:
\begin{lemma}
\[\trdeg_{K(X)}K(V)=\dim \,(\img T)^\Zar-\dim X.\]
\end{lemma}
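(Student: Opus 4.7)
The plan is to exploit the identification $K(V) \cong K((\img T)^{\Zar})$ provided by the preceding construction (2) and reduce the computation to the standard equality of transcendence degree and relative dimension for a dominant morphism between irreducible algebraic varieties. Write $Z := (\img T)^{\Zar}$ for the Zariski closure of $\img T$ inside $\mathbb{E}$.

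First I would verify that $Z$ is irreducible, so that $K(Z)$ is genuinely a field and the isomorphism from (2) has unambiguous meaning. The universal cover $\tilde X$ is a connected complex manifold and therefore irreducible as a complex analytic space. Given any decomposition $Z = Z_1 \cup Z_2$ into closed algebraic subvarieties, the analytifications $Z_i^{\an}$ are closed analytic subsets of $\mathbb{E}^{\an}$, so their preimages $T^{-1}(Z_i^{\an})$ are closed analytic subsets of $\tilde X$ whose union is $\tilde X$. Irreducibility forces $T^{-1}(Z_i^{\an}) = \tilde X$ for some $i$, i.e.\ $\img T \subseteq Z_i$, and since $Z$ is the Zariski closure of $\img T$ this means $Z = Z_i$.

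Next I would observe that the vector bundle projection $\pi_{\mathbb{E}}\colon \mathbb{E} \to X$ restricts to a dominant morphism $Z \to X$. Indeed, the composition $\pi_{\mathbb{E}}^{\an} \circ T$ is the universal covering map $\pi\colon \tilde X \to X^{\an}$, which is surjective, so $\pi_{\mathbb{E}}(Z^{\an})$ contains $X^{\an}$; as an algebraic morphism surjective on closed points, $\pi_{\mathbb{E}}|_Z$ is dominant.

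Combining these two points, the standard identity $\trdeg_{K(X)} K(Z) = \dim Z - \dim X$ for dominant morphisms of irreducible varieties, together with the isomorphism $K(V) \cong K(Z)$ from (2) (which intertwines the inclusions from $K(X)$ on both sides), yields the desired equality. The argument does not present any serious obstacle; the only substantive verification is the irreducibility of $Z$, which is immediate from the connectedness of $\tilde X$, and the dominance of $\pi_\mathbb{E}|_Z$, which is immediate from surjectivity of the covering map.
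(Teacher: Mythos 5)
Your proof is correct and follows exactly the route the paper intends: the lemma is stated as an immediate consequence of the identification $K(V)\cong K((\img T)^{\Zar})$ from construction (2), and your argument simply supplies the routine verifications (irreducibility of $(\img T)^{\Zar}$ via connectedness of $\tilde X$, dominance of the projection to $X$ via surjectivity of the covering map) needed to invoke the standard equality of transcendence degree and relative dimension.
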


We pause to record here some natural properties of $K(V)$.  Note that the tangent sheaf $T_X$ analytifies to $T_{X^\an}$ which pulls back to $T_{\tilde X}$ on the universal cover.  Thus, algebraic derivations of $X$ yield derivations on $X^\an$ and $\tilde X$.  
\begin{lemma}\label{basic RH}\hspace{1in}
\begin{enumerate}
    \item $K(V)\subset K(\tilde X)$ is closed under algebraic derivations of $X$.
    \item If $V'$ is a subquotient of $V$ then $K(V')\subset K(V)$.
    \item $K(V\otimes_\C V')\subset K(V)K(V')$. 
    \item $K(V^{\otimes n})\subset K(V)$ is algebraic.
    \item if $f:X'\to X$ is a dominant generically finite map of smooth varieties, then $K(V)\subset K(f^*V)$ is algebraic.
\end{enumerate}
\end{lemma}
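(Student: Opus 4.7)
The plan is to exploit the concrete description given in item (2) of the discussion preceding the lemma: $K(V)$ is the subfield of $K(\tilde X)$ generated over $K(X)$ by the functions $\tilde t^\vee(\pi^* s)$, where $t$ runs through a basis of $V_x$, $t^\vee$ is its dual in $V_x^\vee$ with flat continuation $\tilde t^\vee$, and $s$ runs through $V_{K(X)}$. Each of (1)--(5) reduces to a direct manipulation of such generators, together with standard functoriality of the pairing and of flat continuation.

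For (1), the canonical connection $\nabla$ on $V_{\cO_X}$ is algebraic, so $\nabla_D$ preserves $V_{K(X)}$ for any algebraic vector field $D$ on $X$; flatness of $\tilde t^\vee$ then gives $(\pi^* D)(\tilde t^\vee(\pi^* s)) = \tilde t^\vee(\pi^* \nabla_D s)$, another generator. For (2), treat subobjects and quotients separately: if $V' \subset V$, every generator of $K(V')$ is literally a generator of $K(V)$, using that a basis of $V'_x$ extends to $V_x$ and $V'_{\cO_X} \subset V_{\cO_X}$; if $V'' = V/V'$, lift a rational section $s'' \in V''_{K(X)}$ to $s \in V_{K(X)}$, view $(V''_x)^\vee$ as the annihilator of $V'_x$ inside $V_x^\vee$, and use that the pairing descends to the quotient. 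For (3), $(V\otimes V')_{K(X)} = V_{K(X)} \otimes_{K(X)} V'_{K(X)}$ is spanned by pure tensors and flat continuation commutes with tensor products, yielding $\widetilde{t \otimes t'}^\vee(\pi^*(s \otimes s')) = \tilde t^\vee(\pi^* s)\cdot \tilde{t'}^\vee(\pi^* s') \in K(V) K(V')$.

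For (4), the inclusion $K(V^{\otimes n}) \subset K(V)$ follows by iterating (3), and algebraicity holds because any generator $a = \tilde t^\vee(\pi^* s)$ of $K(V)$ satisfies $a^n = \widetilde{t^{\otimes n}}^\vee(\pi^* s^{\otimes n}) \in K(V^{\otimes n})$, so $a$ is a root of $x^n - a^n$ over $K(V^{\otimes n})$. For (5), the induced map $\tilde f \colon \tilde X' \to \tilde X$ on universal covers yields $\tilde f^* \colon K(V) \hookrightarrow K(\tilde X')$, and the identity $(f^*V)_{K(X')} = V_{K(X)} \otimes_{K(X)} K(X')$ (writing any rational section of $f^* V$ as $\sum g_i\, f^*(s_i)$ with $g_i \in K(X')$ and $s_i \in V_{K(X)}$) gives $K(f^*V) = K(V) \cdot K(X')$ inside $K(\tilde X')$; since $K(X')/K(X)$ is finite algebraic because $f$ is dominant generically finite, and $K(X) \subset K(V)$, the compositum $K(V)\cdot K(X')$ is finite algebraic over $K(V)$.

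No step presents a serious obstacle: the whole lemma is formal once the concrete description of $K(V)$ is in hand. The parts requiring the most care are (1), where one uses that the Riemann--Hilbert connection is algebraic so that $\nabla_D$ really sends $V_{K(X)}$ to itself, and (5), where one must track the identification of function fields under the map $\tilde f$ on universal covers to make the compositum relation rigorous.
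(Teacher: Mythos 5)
Your proposal is correct and follows essentially the same route as the paper: (1) via algebraicity of the connection, (2) via functoriality of the Deligne canonical extension (you just split sub and quotient, as the paper does implicitly), (3) and (4) via the pure-tensor identity $\alpha^{\otimes n}(s^{\otimes n})=\alpha(s)^n$, and (5) via pulling back flat and algebraic frames so that $K(f^*V)=K(V)\cdot K(X')$ with $K(X')/K(X)$ finite. The only thing worth adding is the paper's explicit caveat that the identifications of canonical extensions in (2) need only hold on a dense Zariski open set, which suffices since everything is phrased in terms of rational sections.
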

\begin{proof}
\begin{enumerate}
    \item The connection on $V_{\cO_X}$ is algebraic.
    \item The Deligne canonical extension is functorial and $V_{\cO,x}'\otimes V'^\vee_x$ is a summand of $ V_{\cO,x}\otimes V_x^\vee$ (compatibly with $\langle\;,\;\rangle$), at least on a dense  Zariski open set.
    \item Obvious.
    \item For $\alpha_1,\dots, \alpha_n\in V_x$ and $s_1,\dots, s_n\in V_{\cO,x}$,  then $\ds\otimes_{i=1}^n\alpha_i(\ds\otimes_{j=1}^n s_j) = \ds\prod_{k=1}^n \alpha_k(s_k)\in K(V)_x$.
    \item We may pull-back flat and algebraic frames from $X$, and the entries of their change of basis matrix generates $K(f^*V)$ over $K(X')$.
\end{enumerate}
\end{proof}

We now relate the transcendence degree of $K(V)/K(X)$ to the Zariski closure $\G$ of the image $\Gamma$ of the monodromy representation $\pi_1(X^\an,x)\to\End_\C(V_x)$. This is a known consequence of differential Galois theory (see \cite[V.Cor.1]{andredgg}) but we include a proof more in the spirit of our geometric approach.

\begin{proposition}
We have 
\[\trdeg_{K(X)}K(V) = \dim\G\]
\end{proposition}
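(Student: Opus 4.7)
The plan is to use the preceding lemma to reduce the proposition to the dimension equality $\dim (\img T)^{\Zar} = \dim X + \dim \G$, and then to establish matching upper and lower bounds by analyzing the fibers of the projection $\img T \to X$.

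For any $y \in X$, the preimage $\pi^{-1}(y) \subset \tilde X$ is a $\pi_1(X^\an, x)$-torsor, and by the $\pi_1$-equivariance of $T$ (where $\pi_1$ acts on $\mathbb{E}$ through the monodromy representation by left multiplication on $V_x$), the image $T(\pi^{-1}(y))$ is a single $\Gamma$-orbit $\Gamma \cdot T_{\tilde y_0}$ inside the isomorphism locus of $\mathbb{E}_y = \Hom(V_y, V_x)$, for any chosen lift $\tilde y_0$ of $y$. Identifying $\mathbb{E}_y$ with $\End(V_x)$ via right multiplication by $T_{\tilde y_0}$, this orbit corresponds to $\Gamma \subset \GL(V_x) \subset \End(V_x)$, whose Zariski closure is $\G$ by definition. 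Hence the fiber of $(\img T)^{\Zar}$ over each $y$ contains an algebraic subvariety of dimension $\dim \G$, yielding the lower bound $\dim (\img T)^{\Zar} \geq \dim X + \dim \G$.

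For the upper bound, I would exhibit an algebraic subvariety $Z \subset \mathbb{E}$ of dimension $\dim X + \dim \G$ containing $\img T$, via a Tannakian construction. By a theorem of Chevalley, $\G \subset \GL(V_x)$ is the simultaneous stabilizer of finitely many $\Gamma$-invariant tensors $t_i \in V_x^{\otimes a_i} \otimes (V_x^\vee)^{\otimes b_i}$. Each such $t_i$ corresponds to a flat global section $\tilde t_i$ of the associated tensor construction of $(V_{\cO_X}, \nabla)$ on $X^\an$, and such flat global sections are algebraic: by Deligne's regular-singularities theorem they extend to holomorphic sections of the canonical extension over a smooth compactification, and algebraicity then follows from GAGA. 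One then defines $Z$, over the isomorphism open of $\mathbb{E}$, by the algebraic condition that $\tau \in \mathbb{E}_y$ send $\tilde t_i(y)$ to $t_i$ under the induced tensor action. The Tannakian characterization of $\G$ identifies the fiber $Z_y$ with $\G \cdot T_{\tilde y_0}$, so $\dim Z = \dim X + \dim \G$; and since parallel transport preserves flat tensors, $\img T \subset Z$.

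The main obstacle is the upper bound, whose essential input is the algebraicity of flat global sections of tensor constructions of $V$ — precisely the content of the Riemann--Hilbert correspondence in the regular singular setting, underlying the canonical algebraic structure used to define $K(V)$. Combining the two bounds yields the equality of dimensions and hence the proposition.
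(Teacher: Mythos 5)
Your overall architecture is the same as the paper's: reduce to computing $\dim(\img T)^{\Zar}$ via the preceding lemma, get the lower bound from the fact that the fibers of $\img T$ are $\Gamma$-orbits whose Zariski closures are $\G$-cosets, and get the upper bound by cutting out an algebraic $\G$-bundle inside $\E$ using Chevalley's theorem together with the algebraicity (via Riemann--Hilbert) of the flat data that defines $\G$. The lower bound is fine.

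The upper bound, however, has a genuine gap: you invoke the \emph{fixed-tensor} form of Chevalley's theorem, asserting that $\G$ is the pointwise stabilizer of finitely many $\Gamma$-invariant tensors $t_i\in V_x^{\otimes a_i}\otimes (V_x^\vee)^{\otimes b_i}$. This is true for reductive $\G$ (Deligne), but false for a general algebraic subgroup of $\GL(V_x)$ --- and here $\G$ is the Zariski closure of an arbitrary finitely generated monodromy group, so it can be, say, a Borel subgroup $\mathbf{B}\subset\GL_2$ (take monodromy generated by $\mathrm{diag}(2,3)$ and a nontrivial unipotent on a base with free fundamental group). In that case every $\mathbf{B}$-fixed tensor is already $\GL_2$-fixed: a $\mathbf{B}$-invariant vector in any $\GL_2$-representation is a highest-weight vector of torus weight zero, hence lies in the trivial isotypic part. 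So the subvariety $Z$ you define by the condition $\tau^{\otimes}(\tilde t_i(y))=t_i$ has fibers equal to a coset of the full fixer of the $t_i$, which can be strictly larger than $\G$ (in the Borel example it is all of $\GL(V_x)$), and your upper bound $\dim Z=\dim X+\dim\G$ fails. The repair is exactly the paper's move: use the \emph{line-stabilizer} form of Chevalley ($\G$ is the stabilizer of a line $L_x$ in a tensor construction $W_x$). A merely $\Gamma$-stable line does not give a flat global section but a rank-one sub-local-system $L\subset W$; by functoriality of Riemann--Hilbert the corresponding sub-bundle $L_{\cO_X}\subset W_{\cO_X}$ is algebraic, and one defines $Z$ (the paper's $Q$) by the algebraic condition that $\tau$ carry $L_y$ isomorphically onto $L_x$, whose fiber over $y$ is precisely $\G\cdot T_{\tilde y_0}$. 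With that substitution your argument goes through and coincides with the paper's.
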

 \begin{proof}
 Again, let $t_i$ be a basis of $V_x$ and $\tilde t_i$ its flat continuation.  We identify $\E^\an$ as the total space of the local system $V^\vee\otimes_\C V_x$, and the universal cover of $\E^\an$ as $\tilde X\times \End(V_x)$ via the map $q:\tilde X\times \End(V_x)\to \E^\an$ sending $(\tilde y,f)\mapsto (y,f\circ T_{\tilde y})$.  Via this identification, we get an induced monodromy action on $\tilde X \times \End(V_x)$:  explicitly $\gamma\in\pi_1(X^\an,x)$ maps $(\tilde y,f)\mapsto (\gamma \tilde y,f\circ T_\gamma^{-1})$.  Thus if we let $s:X\ra X\times \End(V_x)$ denote the identity section, then $T=q\circ s$.
 
Let $Z$ be the Zariski closure of $\img T$. Note that $q^{-1}(\img T)$ is simply $\tilde X\times \Gamma$.  As $q$ is an algebraic isomorphism on fibers and $\Gamma$ is Zariski dense in $\G$ it follows that $q^{-1}(Z)\supset \tilde X\times G(\C)$.  Let $Q$ be the image of $\tilde X\times G(\C)$ in $\E^{\an}$. We will show that $Q$ is algebraic, and therefore that $Z=Q$ is a $G$-bundle over $X$, `completing the proof. 

To see this, first note using Chevalley's theorem \cite[Theorem 4.19]{milnelag} that there is a line $L_x$ inside a representation $W_x$ of $\GL(V_x)$ such that $\G$ is the stabilizer of $L_x$. Note that $W_x$ occurs as a direct summand of a direct sum of tensor powers of $V_x$ and its dual. It follows that $W_x$ corresponds to the fiber at $x$ of a local system $W$ occcuring as a direct summand of a direct sum of tensor powers of $V$ and its dual, and $L$ corresponds to a rank one sub local system of $W$.

It remains to observe that $Q$ consists of those elements $f:V_y\ra V_x$ of $E^{\an}$ such that $f$ induces an isomorphism from $L_y$ to $L_x$. Since the Riemann-Hilbert correspondence is functorial, this latter condition is algebraic, hence $Q$ is algebraic as desired.

 \end{proof}

\section{Local systems of holomorphic functions}
\subsection{Universal covers and definability}
Throughout this section, by definable we always mean definable in the o-minimal structure $\R_{\an,\exp}$; see \cite{Dries} for details.  Let $X$ be a definable (complex) analytic space (for example, the definabilization of a complex algebraic variety).  We can think of $X$ as locally the zero-set of definable holomorphic functions, with definable holomorphic gluing functions (see \cite{bbt} for full details).  Let $\pi:\tilde X\to X^\an$ be the universal cover of the associated analytic space.  We say a meromorphic function $g\in K(\tilde X)$ is definable if for any open definable $f:U\to X$ and any analytic lift $\tilde f:U^{an} \to \tilde X$ the pullback $\tilde f^*g$ is definable.  We denote by $K_\Def(\tilde X)\subset K(\tilde X)$ the field of definable meromorphic functions on $\tilde X$.  Likewise, for any definable analytic space $Y$ we say an analytic map $\phi:\tilde X\to Y^\an$ is definable if for any open definable $f:U\to X$ and any analytic lift $\tilde f:U^\an\to \tilde X$ the composition $\phi\circ\tilde f$ is definable.\footnote{ Note that we are not endowing $\tilde X$ with the structure of a definable analytic space, since it will not in general be covered by finitely many lifts of open definable sets in $X$; this will only happen if $\pi_1(X)$ is finite.}



By a definable fundamental set for $X$ we mean a definable analytic space $\cF$, a surjective \'etale definable analytic morphism $f:\cF\to X$ and an analytic lift $\tilde f:\cF^\an\to \tilde X$.  

\begin{lemma} Let $f:\cF\ra X$ be a definable fundamental set.

\begin{enumerate}
    \item A function $g\in K(\tilde X)$ is in $K_\Def(\tilde X)$ if and only if the pullback $(\gamma\circ \tilde f)^*g$  is definable for each $\gamma\in \pi_1(X^\an,x)$.
    \item A map $\phi:\tilde X\to Y^\an$ is definable if and only if the map $\phi\circ\gamma\circ\tilde f$ is definable for each $\gamma\in \pi_1(X^\an,x)$.
\end{enumerate}
\end{lemma}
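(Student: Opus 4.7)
The plan is to reduce the definability condition across arbitrary lifts to the specific lifts $\gamma\circ\tilde f$ via a fiber-product argument combined with the finiteness of connected components in the o-minimal setting. The forward direction of both parts is immediate: $\gamma\circ\tilde f:\cF^\an\to\tilde X$ is itself an analytic lift of the definable fundamental-set map $f:\cF\to X$, so if $g$ (resp.\ $\phi$) is definable in the sense defined above, then instantiating the definition with the pair $(f,\gamma\circ\tilde f)$ directly yields definability of $(\gamma\circ\tilde f)^*g$ (resp.\ of $\phi\circ\gamma\circ\tilde f$). The substance is in the converse direction, and I would handle (1) and (2) by the same fiber-product argument; I will describe it for (1), since (2) is identical with $\phi$ in place of $g$.

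Given an arbitrary open definable $h:U\to X$ with analytic lift $\tilde h:U^\an\to\tilde X$, form the definable fiber product $V:=U\times_X\cF$ with projections $p_1:V\to U$ and $p_2:V\to\cF$. The two maps $\tilde h\circ p_1^\an$ and $\tilde f\circ p_2^\an$ are both analytic lifts to $\tilde X$ of the common composition $h\circ p_1=f\circ p_2:V\to X$, so they differ by a locally constant deck transformation on $V^\an$. By o-minimal finiteness, $V$ has finitely many definable connected components $V_1,\ldots,V_n$, and on each $V_i^\an$ this deck transformation is a fixed element $\gamma_i\in\pi_1(X^\an,x)$. Hence
\[
p_1^*\tilde h^*g\big|_{V_i}\;=\;p_2^*(\gamma_i\circ\tilde f)^*g\big|_{V_i},
\]
which is definable by hypothesis; gluing over this finite partition yields definability of $p_1^*\tilde h^*g$ on all of $V$.

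It remains to descend definability from $V$ to $U$. Since $f:\cF\to X$ is étale surjective and definable, so is the base change $p_1:V\to U$; by étaleness combined with o-minimal finiteness, $U$ is covered by finitely many definable open subsets $U_i$ each admitting a definable analytic section $\sigma_i:U_i\to V$ of $p_1$. Then $\tilde h^*g|_{U_i}=\sigma_i^*(p_1^*\tilde h^*g)|_{U_i}$ is definable, so $\tilde h^*g$ is definable on $U$, as required. The main technical obstacle is the finiteness of the set $\{\gamma_i\}$ — equivalently, the o-minimal finiteness of connected components of $V$ — without which the comparison of the two lifts would produce a locally constant but potentially infinite family of deck transformations, and the separate hypotheses indexed by $\gamma\in\pi_1(X^\an,x)$ would not assemble into a single definable conclusion.
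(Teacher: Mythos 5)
Your converse argument is essentially the paper's. Since $U\to X$ is an open immersion, your fiber product $U\times_X\cF$ is just $f^{-1}(U)$; the paper likewise decomposes it into its finitely many connected components, identifies the two lifts up to a deck transformation $\gamma_i$ on each component, and then descends along the surjection $f^{-1}(U)\to U$ --- using definable choice to produce a definable (not necessarily analytic) section, which suffices because the function being descended is constant on fibers. Your variant, covering $U$ by finitely many definable opens admitting definable \emph{analytic} sections of $p_1$, is slightly under-justified (\'etaleness gives local sections, but extracting a finite definable subcover needs an argument such as definable trivialization of the finite-fibered map $p_1$); definable choice is the cleaner route and is what the paper uses.

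The gap is in the forward direction, which you dismiss as immediate. The definition of definability of $g$ quantifies over definable \emph{open subsets} $U\subset X$ together with analytic lifts $U^\an\to\tilde X$ --- this is the reading the paper's own sufficiency proof relies on, since that proof only verifies definability of pullbacks along such inclusions. A definable fundamental set $f:\cF\to X$ is a surjective \'etale definable morphism from a definable analytic space that is in general \emph{not} injective (the strip fundamental set for the exponential, or a disjoint union of overlapping simply connected opens), so the pair $(f,\gamma\circ\tilde f)$ is not an instance of that definition and cannot simply be ``instantiated.'' This direction requires the same covering mechanism you use for the converse, run the other way: cover $X$ by finitely many connected definable opens $U_i$ admitting lifts $\iota_i$ to $\tilde X$; on each connected component $V_j$ of $f^{-1}(U_i)$ one has $\tilde f|_{V_j}=\gamma'\circ\iota_i\circ (f|_{V_j})$ for some $\gamma'\in\pi_1(X^\an,x)$; definability of $g$ applied to the lift $\gamma'\circ\iota_i$ of $U_i$ gives definability of $\tilde f^*g$ on $V_j$; and one glues over the finitely many $V_j$. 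This is exactly how the paper proves necessity, and without it your proof establishes only one implication of the stated equivalence.
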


\begin{proof}

\begin{enumerate}
    \item For the necessity, it is sufficient to take $\gamma=\Id$. let $U_1,\dots,U_n$ be finitely many connected definable open sets in $X$ whose analytifications lift via $\iota_i$ to $\tilde{X}$. Let $V_j$ be the connected components of $f^{-1}(U_i)$.  Then for each $V_j$ mapping to $U_i$, one may pick an element $\gamma'\in \pi_1(X^{\an},x)$ such that $\tilde{f}\mid V_j = \gamma'\circ\iota_i\circ f\mid V_j$. Now since $g$ is definable it follows by definition that $(\gamma\circ\iota_i)^*g$ is definable, and hence so is $\tilde{f}^* g\mid V_j$. The claim now follows since the $V_j$ cover $\cF$.
    
    For the sufficiency, let $U\subset X$ be definable open set and $\iota:U^{an}\ra \tilde{X}$ be a lift. Let $V=f^{-1}U$ and let $V_1,\dots,V_m$ be the connected components of $V$. Then for each $i$ there is an element $\gamma_i\in\pi_1(X^{\an},x)$ such that $\gamma_i\circ f\mid V_i = \iota\circ f\mid V_i$. Now by assumption $(\gamma_i\circ f)^*g $ is definable, and therefore it follows that so is $(\iota\circ f\mid V_i)^*g$, and hence so is $(\iota\circ f\mid V)^* g$. Now, since $f\mid V$ is surjective onto $U$, it follows by definable choice that $\iota^*g$ is also definable, as desired.

    \item This is the exact same proof as above, replacing $g$ by $\phi$ at each step.
\end{enumerate}
    
\end{proof}

\begin{example}
By definable triangulation \cite{Dries}, any definable analytic space $X$ admits a cover by simply connected definable open sets $U_i\subset X$, and for arbitrary choices of lifts $\tilde U_i\subset\tilde X$ the union $\cF=\bigcup_i U_i$ (together with the lift) is a definable fundamental set for $X$.
\end{example}
\begin{example}Let $X$ be a smooth algebriac variety.  We cover a log smooth compactification $\bar X$ with polydisks $\Delta^n\cong U_i\subset \bar X$ for which $\Delta^{k_i}\times(\Delta^*)^{\ell_i}\cong U_i\cap X$.  Let $\Sigma\subset\Hh$ be a bounded vertical strip fundamental set for the exponential $e^{2\pi i z}:\Hh\to\C^*$ endowed with the obvious definable structure.  For each $i$ choose a lift $f_i:\Delta^{k_i}\times\Sigma^{\ell_i}\to \tilde X$ of the composition $\Delta^{k_i}\times \Sigma^{\ell_i}\to\Delta^{k_i}\times(\Delta^*)^{\ell_i}\cong U_i\subset X$. Then after shrinking each polydisk, the union of the images of the $f_i$ form such a definable fundamental set.
\end{example}

\subsection{Local systems of definable meromorphic functions}

Assume now that $X$ is a smooth complex algebraic variety with its canonical definable structure, and let $x\in X$ be a choice of basepoint.  We denote by $X^{\Def}$ the definabilization of $X$, which is a definable analytic space (See \cite{bbt}). For a complex local system $V$ on $X$, we say $V$ has norm one eigenvalues at infinity if for some (hence any) log smooth compactification $\bar X$ of $X$, the local monodromy of $V$ at the boundary has eigenvalues with (complex) norm $1$. In general, the total space of $V$ has two definable structures: the flat one and the algebraic one (given by Riemann-Hilbert). By \cite{BMull}, if $V$ has norm one eigenvalues at infinity then these two definable structures on $V$ are equivalent.  Observe that:
\begin{enumerate}
    \item $K_\Def(\tilde X)$ is closed under algebraic derivations of $X$.
    \item Up to passing to a Zariski open set, a homomorphism of definable sheaves $\phi:V\to \cO_{X^\Def}$ for a local system $V$ is equivalent to a homomorphism of $\pi_1(X^\an,x)$-modules $V_x\to K_\Def(\tilde X)$.
    \item If $V$ has norm one eigenvalues at infinity, then $K(V)\subset K_\Def(\tilde X)$.
\end{enumerate}

\begin{lemma}\label{lem: monofactor}Suppose $V$ has norm one eigenvalues at infinity.  Then any homomorphism of $\pi_1(X^\an,x)$-modules $\mu:V_x^\vee\to K_\Def(\tilde X)$ factors through $K(V)$. 
\end{lemma}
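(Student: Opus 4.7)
The plan is to reinterpret $\mu$ as a definable meromorphic section of the algebraic bundle $V_{\cO_X}$ and apply o-minimal GAGA to conclude the section is rational; by the very definition of $K(V)$ this will force $\mu(V_x^\vee)\subset K(V)$. In detail, a $\pi_1(X^{\an},x)$-equivariant homomorphism $\mu\colon V_x^\vee\to K_{\Def}(\tilde X)$ corresponds by duality to a $\pi_1$-invariant element of $V_x\otimes_\C K_{\Def}(\tilde X)$. The canonical flat trivialization $\pi^*V_{\cO_X}\cong V_x\otimes_\C\cO_{\tilde X}$ is $\pi_1$-equivariant, so if $t_i$ is a basis of $V_x$ with flat continuation $\tilde t_i$, then the tuple $(\mu(t_i^\vee))_i$ is the coordinate vector, in the flat frame $\tilde t_i$, of a $\pi_1$-invariant meromorphic section of $\pi^*V_{\cO_X}$ on $\tilde X$. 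This descends to a meromorphic section $\sigma$ of $V_{\cO_X}^{\an}$, definable for the \emph{flat} definable structure on $V_{\cO_X}^{\an}$.

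Next, I would invoke the norm-one eigenvalue hypothesis: by \cite{BMull}, this ensures that the flat and algebraic (Deligne canonical extension) definable structures on $V_{\cO_X}^{\an}$ coincide. Hence $\sigma$ is a definable meromorphic section of the algebraic vector bundle $V_{\cO_X}$. Applying the definable Chow theorem of Peterzil--Starchenko \cite{definechow} as extended in \cite{bbt} (in its form for meromorphic sections of algebraic coherent sheaves, obtained by clearing denominators locally), $\sigma$ is rational, i.e.\ $\sigma\in V_{K(X)}$. Unwinding the duality, for every $\alpha\in V_x^\vee$ the function $\mu(\alpha)$ equals the pairing $\langle\sigma,\alpha\rangle$ appearing in the definition of $K(V)$, so $\mu(\alpha)\in K(V)$.

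The main obstacle is the matching of the two definable structures on $V_{\cO_X}^{\an}$: without the norm-one eigenvalue hypothesis, a section definable in the flat structure need not be definable in the algebraic structure, and o-minimal Chow cannot be applied. The remainder of the argument is a formal unpacking of the basepoint-free description of $K(V)$ given in item (1) of Section 2, together with the observation that $\pi_1$-invariance of the element $\sum_i \mu(t_i^\vee)\otimes t_i$ is exactly the descent data needed to produce $\sigma$.
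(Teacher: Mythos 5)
Your proposal is correct and is essentially the paper's own argument in dual form: the paper packages $\mu$ as a homomorphism of definable sheaves $\cO_{X^\Def}\otimes_{\C}V^\vee\to\cO_{X^\Def}$ (after shrinking to a Zariski open), invokes the norm-one hypothesis via \cite{BMull} to match the flat and algebraic definable structures, and applies definable GAGA to produce the rational section $s$ with $\mu(v)=\langle s,v\rangle$, exactly as you do with the section $\sigma$. The only cosmetic difference is that you work with a meromorphic section directly and "clear denominators locally," where the paper passes to a dense Zariski open so as to apply GAGA to a regular sheaf homomorphism.
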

\begin{proof}After passing to a Zariski open, let $\nu :V^\vee\to \cO_{X^\Def}$ be the resulting homomorphism of definable sheaves.  Note that the canonical algebraic structure on $(V^\vee)_{\cO_{X^\an}}$ is canonically $(V_{\cO_X})^\vee$.
We have a homomorphism of definable coherent sheaves $\mathrm{id}\otimes\nu: \cO_{X^\Def}\otimes_{\C_{X^\Def}}V^\vee \to\cO_{X^\Def}$ which by definable GAGA \cite{bbt} comes from an algebraic homomorphism $F:V_{\cO_X}^\vee\to \cO_X$ which in turn is equal to evaluation on a rational section $s$ of $V_{\cO_X}$.  Thus, $\mu$ is identified with $v\ra \langle s,v\;\rangle$, which completes the proof.
\end{proof}
The main idea in the proof of the lemma also gives us a criterion for a subfield $L\subset K_\Def(\tilde X)$ to contain $K(V)$.  In general for a $G$-module $U$ we denote by $U_0$ the same vector space with trivial $G$-action.  
\begin{lemma}\label{condition}  Suppose $K(X)\subset L\subset K_\Def(\tilde X)$ is a $\pi_1(X^\an,x)$-stable subfield and that there is a homomorphism of $\pi_1(X^\an,x)$-modules
\[V_x^\vee\to L\otimes_\C (V_x^\vee)_0\]
which evaluates to an isomorphism upon specialization of $K$ to some point of $X$.  Then $K(V)\subset L$.

\end{lemma}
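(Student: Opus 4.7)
The plan is to reduce the lemma to Lemma \ref{lem: monofactor} by decomposing the given equivariant map $\mu : V_x^\vee \to L \otimes_\C (V_x^\vee)_0$ into its scalar coordinates, and then to use the specialization hypothesis to promote the resulting rational sections of $V_{\cO_X}$ into a full rational basis.

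First I would fix a basis $\{e_j\}_{j=1}^n$ of the underlying vector space, used simultaneously as a basis of $V_x^\vee$ and of the trivial copy $(V_x^\vee)_0$. Writing $\mu(e_j) = \sum_i a_{ij} \otimes e_i$ with $a_{ij} \in L$, the triviality of the $\pi_1(X^\an,x)$-action on the second tensor factor unwinds the equivariance of $\mu$ into the statement that for each fixed $i$ the coordinate map $\phi_i : V_x^\vee \to L$ defined by $e_j \mapsto a_{ij}$ is itself a $\pi_1(X^\an,x)$-equivariant homomorphism. Since $\phi_i$ takes values in $L \subset K_\Def(\tilde X)$, Lemma \ref{lem: monofactor} applies and produces a rational section $s_i$ of $V_{\cO_X}$ with $\phi_i(v) = \langle s_i, v \rangle$; in particular $a_{ij} = \langle s_i, e_j \rangle$.

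Next I would use the specialization hypothesis. Pick a point $x_0 \in X$ at which all $a_{ij}$ are regular and $\mu$ specializes to an isomorphism. Then the matrix $(a_{ij}(x_0))$ is invertible, so the values $s_1(x_0), \ldots, s_n(x_0) \in V_{x_0}$ are linearly independent; since $V_{\cO_X}$ has rank $n$, this forces $s_1, \ldots, s_n$ to be a $K(X)$-basis of the rational sections $V_{K(X)}$.

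Finally I would conclude by expansion: every generator $\langle s, v\rangle$ of $K(V)$ over $K(X)$ can be written as a $K(X)$-linear combination of the $\langle s_i, e_j \rangle = a_{ij}$ by expanding $s = \sum_i c_i s_i$ with $c_i \in K(X)$ and $v = \sum_j \lambda_j e_j$ with $\lambda_j \in \C$. Since $K(X) \subset L$ and each $a_{ij} \in L$, this yields $K(V) \subset L$. The entire argument is quite direct once the reduction is set up; the only real obstacle is bookkeeping — making sure the asymmetric $\pi_1$-action on the two tensor factors correctly decomposes the equivariance of $\mu$ into equivariance of each coordinate $\phi_i$, after which the invertibility at $x_0$ delivers exactly the non-degeneracy needed to upgrade $\{s_i\}$ from rational sections to a rational basis.
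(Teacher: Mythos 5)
Your proof is correct and follows essentially the same route as the paper's: the paper applies definable GAGA once to the whole map $F:\cO_{X^\Def}\otimes_{\C}V^\vee\to\cO_{X^\Def}\otimes_\C(V_x^\vee)_0$ and reads off the change of basis between $F(s_i)$ and $F(1\otimes t_j)$, while you reach the identical algebraization coordinate-by-coordinate via Lemma~\ref{lem: monofactor}, with your observation that the $s_i$ form a rational basis being exactly the paper's statement that $F$ is rationally an isomorphism. The remaining linear algebra is the same in both versions.
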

\begin{proof}
As in the previous lemma, we obtain a map of definable sheaves $V^\vee\to\cO_{X^\Def}\otimes_\C (V_x^\vee)_0$ and therefore a homomorphismm $F:\cO_{X^\Def}\otimes_{\C_{X^\Def}}V^\vee \to\cO_{X^\Def}\otimes_\C (V_x^\vee)_0$ which is algebraic by definable GAGA.  It has full rank at some point of $X$ and is therefore rationally an isomorphism.  On the one hand, choosing a rational basis $s_i$ and a flat basis $1\otimes t_j$ of the source $\cO_{X^\Def}\otimes_{\C_{X^\Def}}V^\vee$, the generators of $K(V)$ are the matrix elements of the change of basis matrix between the $s_i$ and the $1\otimes t_j$.  One the other hand, the images $F(1\otimes t_j)$ are in $L\otimes_\C (V^\vee_x)_0$ and the images $F(s_i)$ are in $K(X)\otimes_\C(V^\vee_x)$, so the change of basis matrix between $F(s_i)$ and the $F(1\otimes t_j)$ has entries in $K$.  Thus, $K(V)\subset L$.
\end{proof}

\subsection{Generalized period maps}

Suppose we have a homomorphism $\rho:\pi_1(X^\an,x)\to \G(\C)$.  Let $\Gamma\subset\G(\C)$ be the image of $\pi_1(X^\an,x)$.  Suppose further that $Y$ is an algebraic variety with an action of $\G$ with finite kernel, meaning no positive-dimensional subgroup of $\G(\C)$ acts as the identity on all of $Y$.  Let $\phi:\tilde X\to Y^\an$ be a definable $\pi_1(X^\an,x)$-equivariant map.  

\begin{defn}
We define $K_\partial(\phi)$ to be the subfield of $K_\Def(\tilde X)$ generated under algebraic derivations of $X$ by pullbacks of rational functions of $Y$ which are generically regular along $\img \phi$ (or equivalently of $K(\img\phi)^\Zar$).
\end{defn}  Note that $K_\partial(\phi)$ contains $K(X)$.

\begin{rem}
By definable Chow, a definable $\pi_1(X^\an,x)$-equivariant map $\tilde X\to Y^\an$ up to passing to a dense Zariski open set is equivalent to an element of $Y(K_\Def(X))^{\pi_1(X,x)}$, that is, a $\pi_1(X^\an,x)$-invariant $K_\Def(X)$-rational point of $Y$.  
\end{rem}

\begin{lemma}\label{et lift}Suppose $f:Y'\to Y$ is a $\G$-equivariant finite map of varieties with $\G$-action and that we have a commutative diagram
\[
\begin{tikzcd}
&Y'^\an\ar{d}{f^\an}\\
\tilde X\ar{ur}{\phi'}\ar{r}{\phi}&Y^\an
\end{tikzcd}
\]
where $\phi$ and $\phi'$ are definable $\pi_1(X^\an,x)$-equivariant.  Then $K_\partial(\phi)\subset K_\partial(\phi')$ is algebraic.
\end{lemma}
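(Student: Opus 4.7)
The plan is to prove the inclusion $K_\partial(\phi)\subset K_\partial(\phi')$ first and then establish algebraicity. For the inclusion, the diagram gives $\phi=f^\an\circ\phi'$, so for any rational function $g$ on $Y$ generically regular along $(\img\phi)^\Zar$, the pullback $f^*g$ is a rational function on $Y'$ generically regular along $(\img\phi')^\Zar$. Hence $\phi^*g=\phi'^*(f^*g)\in K_\partial(\phi')$. Since both fields are closed under algebraic derivations of $X$ by construction, closing under derivations preserves this inclusion.

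Next, set $Z=(\img\phi)^\Zar\subset Y$ and $Z'=(\img\phi')^\Zar\subset Y'$. Because $f$ is finite and hence closed, $f(Z')$ is a closed subset of $Y$ containing $f(\img\phi')=\img\phi$, so $f(Z')\supset Z$; the reverse inclusion is Zariski continuity. Hence $f\vert_{Z'}\colon Z'\to Z$ is finite surjective, and the pullback $K(Z)\hookrightarrow K(Z')$ is a finite algebraic extension. For any $g'\in K(Z')$, write a monic polynomial relation $g'^n+c_{n-1}g'^{n-1}+\cdots+c_0=0$ with $c_i\in K(Z)$ (viewed inside $K(Z')$ via $f\vert_{Z'}^*$). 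Applying $\phi'^*$ and using the identity $\phi'^*\circ f^*=\phi^*$ transforms this into the relation $(\phi'^*g')^n+\phi^*c_{n-1}(\phi'^*g')^{n-1}+\cdots+\phi^*c_0=0$ with coefficients in $\phi^*K(Z)\subset K_\partial(\phi)$. Thus every generator of $K_\partial(\phi')$, before taking derivation closure, is algebraic over $K_\partial(\phi)$.

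To complete the argument, one invokes the standard fact from differential algebra that the algebraic closure of a differential subfield is again differential: if $a$ has minimal polynomial $P$ over $K_\partial(\phi)$, then differentiation yields $Da=-P^D(a)/P'(a)$, still algebraic over $K_\partial(\phi)$ because $P^D$ has coefficients in $K_\partial(\phi)$ (which is closed under $D$). Therefore the algebraic closure of $K_\partial(\phi)$ inside $K_\Def(\tilde X)$ is stable under algebraic derivations of $X$, so it contains the derivation-closure $K_\partial(\phi')$ of the set of generators $\phi'^*K(Z')$, completing the proof. The only point requiring any care is the equality $f(Z')=Z$, which is what converts finiteness of $f$ into a genuine finite field extension of the Zariski closures; everything else is formal.
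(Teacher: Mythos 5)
Your proof is correct and takes essentially the same route as the paper: algebraicity of the generators $\phi'^*g'$ over $K_\partial(\phi)$ via finiteness of $f$, followed by the standard differential-algebra fact that differentiating a minimal polynomial (using $P'(h)\neq 0$ in characteristic zero) keeps you inside the algebraic closure. The only cosmetic difference is that you track the Zariski closures $Z$, $Z'$ and verify $f(Z')=Z$ explicitly, whereas the paper simply reduces at the outset to the case where $\img\phi$ is Zariski dense and works with $K(Y)\subset K(Y')$ directly.
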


\begin{proof}We may assume the image of $\phi$ (hence of $\phi'$) is Zariski dense.  Any function $g'$ on $Y'$ satisfies a polynomial $P\in K(Y)[t]$, and $(\phi')^*g'$ satisfies the polynomial $\phi^*P\in K_\partial(\phi)[t]$.  Let $K$ be the field generated by the $\phi'^*g'$ over $K_\partial(\phi)$.  It remains to show that $K$ is closed under algebraic derivations.

Any $h\in K$ satisfies a minimal polynomial $P(t)$ over $K_\partial(\phi)$.  For any algebraic derivation $\theta$ of $X$, we then have
\[0=(\theta P) (h)+P'(h)\theta h\]
where $\theta P$ is the polynomial with differentiated coefficients, and $P'(t)$ is the formal derivative with respect to $t$ (treating the coefficients as constant).  Since $0\neq P'(h)$, we have $\theta h\in K$.
\end{proof}

\begin{theorem}\label{KV in Kphi}Let $Y$ be an algebraic variety with an action of an algebraic group $\G$ with finite kernel in the above sense.  Let $U$ be a representation of $\G$ with finite kernel, $\rho:\pi_1(X^\an,x)\to \G(\C)$ a homomorphism, $V$ the resulting local system on $X$, and $\phi:\tilde X\to Y^\an$ a definable $\pi_1(X^\an,x)$-equivariant map with Zariski dense image.  Then if $V$ has norm one eigenvalues at infinity,
\[\bar{K(V)}= \bar{K_\partial(\phi)}.\]
\end{theorem}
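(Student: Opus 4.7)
The plan is to establish the inclusion $\bar{K_\partial(\phi)}\subset\bar{K(V)}$ directly by a line-subbundle argument, and then match transcendence degrees over $K(X)$ to deduce the reverse inclusion using that $\bar{K_\partial(\phi)}$ is algebraically closed in the ambient field $K_{\Def}(\tilde X)$.

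For the forward inclusion, I would fix a $\G$-equivariant (locally closed) embedding $Y\hookrightarrow \Pp(W)^\an$ into the projectivization of a $\G$-representation $W$ taken as a subquotient of a tensor construction in $U$ and $U^\vee$; this is possible because $U$ is a representation of $\G$ with finite kernel, and then $K(W)\subset\bar{K(V)}$ by Lemma \ref{basic RH}. The pullback $\phi^*\cO_{\Pp(W)}(-1)$ gives a $\pi_1(X^\an,x)$-equivariant analytic line subbundle of the flat trivialization of $W_{\cO_{\tilde X^\an}}$, which descends to an analytic line subbundle $\cL\subset W_{\cO_{X^\an}}$. The norm-one assumption on $V$ (hence on $W$) ensures via \cite{BMull} that the canonical algebraic and flat definable structures on $W_{\cO_X}$ agree, so definable GAGA \cite{bbt} applied to the definable $\cL$ yields an algebraic line subbundle. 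Expressing a nonzero rational section $s=\sum h_i t_i$ of $\cL$ in a flat basis $t_i$ of $W$ gives $h_i\in K(W)$, and for any rational $f$ on $Y$ written as a ratio of same-degree polynomials in $W$-coordinates, $\phi^*f$ becomes a rational expression in the $h_i$'s, hence lies in $K(W)\subset\bar{K(V)}$. Since $\bar{K(V)}$ is closed under algebraic derivations of $X$ by Lemma \ref{basic RH}(1), it follows that $K_\partial(\phi)\subset\bar{K(V)}$.

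For the transcendence-degree lower bound, I would consider the $k$-jet map $J^k\phi:\tilde X\to J^kY$, which is $\pi_1$-equivariant for the natural $\G$-action on $J^kY$ induced from $Y$. The stabilizers $H_k\subset\G$ of $J^k\phi(\tilde x)$ form a descending chain of algebraic subgroups whose intersection consists of elements $g\in\G$ fixing the full formal jet of $\phi$ at $\tilde x$; such a $g$ acts as the identity on a formal neighborhood of $\phi(\tilde x)$, hence on the Zariski-dense image of $\phi$, hence on $Y$, so lies in the finite kernel of $\G\to\Aut(Y)$. By Noetherianity, $H_k$ is finite for $k\gg 0$. Arguing as in Section 2 (in the proof of $\trdeg_{K(X)}K(V)=\dim\G$), the Zariski closure $G_k$ of the image of $(\pi,J^k\phi):\tilde X\to X\times J^kY$ is $\G$-invariant, and its generic fibre over $X$ contains the orbit $\G\cdot J^k\phi(\tilde x)$ of dimension $\dim\G$, giving $\dim G_k\geq\dim X+\dim\G$. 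Since pullbacks of rational functions on $J^kY$ via $J^k\phi$ are, in local coordinates, iterated algebraic derivatives of elements of $\phi^*K(Y)$ and hence lie in $K_\partial(\phi)$, we obtain $\trdeg_{K(X)}K_\partial(\phi)\geq\dim\G=\trdeg_{K(X)}K(V)$.

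Combining the two steps, $\bar{K_\partial(\phi)}\subset\bar{K(V)}$ and both fields have transcendence degree $\dim\G$ over $K(X)$ inside $K_{\Def}(\tilde X)$; since $\bar{K_\partial(\phi)}$ is algebraically closed there, $\bar{K(V)}$ is algebraic over it and hence equal to it. The main technical obstacle is the jet-space analysis in the second step: proving finiteness of the higher-order stabilizers $H_k$ using the finite-kernel hypothesis in conjunction with the Zariski-density of $\phi(\tilde X)$ in $Y$, and adapting the argument of Section 2 to establish $\G$-invariance of $G_k$ together with the computation of generic fibre dimension in this relative jet-space setting.
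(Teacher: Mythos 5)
Your strategy is sound, and its second half is genuinely different from the paper's. The paper reduces the general case, via jet spaces (to convert the finite-kernel hypothesis into finite stabilizers), the quotient $Y\to \G\backslash Y$, and a finite \'etale cover splitting it, to the model case $Y=\mathbf{End}(U)$ with $\G=\GLGL(\End(U))$ acting by left multiplication; there it proves the \emph{exact} equality $K(V)=K_\partial(\phi)$ by two applications of definable GAGA: \cref{lem: monofactor} gives $K_\partial(\phi)\subset K(V)$ because $R[\mathbf{End}(U)]$ is a quotient of tensor powers of $U^\vee$, and \cref{condition}, applied to the tautological map $U^\vee\to\C[\G]\otimes_\C U_0^\vee$, gives the reverse. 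Your forward inclusion is essentially a repackaging of the first step: the line subbundle $\phi^*\cO_{\Pp(W)}(-1)\subset W_{\cO_X}$, made algebraic by \cite{BMull} plus definable GAGA, plays exactly the role of the $\pi_1$-equivariant map into $K_\Def(\tilde X)$ that \cref{lem: monofactor} shows factors through $K(V)$. Your reverse inclusion, however, replaces \cref{condition} and the whole quotient/\'etale-cover reduction by a transcendence-degree count against the Proposition of \S2 ($\trdeg_{K(X)}K(V)=\dim\G$), using jets only to make stabilizers finite. This is a legitimate and arguably more conceptual route; what it buys is avoiding the construction of $\G\backslash Y$ and the section over $Z'\to Z$, and what it loses is the exact equality $K(V)=K_\partial(\phi)$ that the paper's argument yields in the model case.

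Three points need care before this is a complete proof. First, your orbit count only shows that the generic fibre of $G_k\to X$ contains the closure of a $\rho(\pi_1(X^\an,x))$-orbit, hence a $\G_0$-orbit for $\G_0=\overline{\rho(\pi_1(X^\an,x))}$, not a $\G$-orbit; and likewise the Proposition of \S2 computes $\trdeg_{K(X)}K(V)=\dim\G_0$ (using that $U$ has finite kernel). So you must first replace $\G$ by $\G_0$, checking that this preserves all hypotheses (it does: the kernel of $\G_0$ on $Y$ is contained in that of $\G$, and Zariski density of $\img\phi$ is unaffected). Second, the $\G$-equivariant locally closed embedding $Y\hookrightarrow\Pp(W)$ with $W$ a tensor construction in $U$ is not automatic for an arbitrary variety $Y$: you need $Y$ normal and quasi-projective (Sumihiro), so you should first pass to a $\G$-stable dense open of the normalization and invoke \cref{et lift} to see that this does not change $\bar{K_\partial(\phi)}$. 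Third, in the finite-stabilizer step, an element fixing all finite jets of $\phi$ at $\tilde x$ fixes $\phi$ on an analytic neighborhood of $\tilde x$ (not a formal neighborhood of $\phi(\tilde x)$ in $Y$); the correct conclusion is that its fixed locus, being Zariski closed and containing $\phi(U)$ for $U$ open in $\tilde X$, contains $(\img\phi)^\Zar=Y$. A minor further remark: \cref{basic RH} does not literally cover $K(V^\vee)$, though $K(V^\vee)=K(V)$ since the dual change-of-basis matrix is the inverse transpose.
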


Before the proof we recall jet spaces.  Let $A$ be an artinian $\C$-algebra.  Recall that for a $\C$-scheme $Y$ the jet space $J_AY$ parametrizes $\C$-morphisms $\Spec(A)\to Y$.  Precisely, $\C$-morphisms $S\to J_AY$ are $\C$-morphisms $\Spec(A)\times S\to Y$.  For any homomorphism $A'\to A$ of artinian $\C$-algebras there is a natural morphism $J_{A'}Y\to J_{A}Y$, and in particular there is a morphism $\pi_A:J_AY\to Y$ by taking the canonical quotient $A\to \C$.   Note that if $Y$ is smooth, then for any $A$ the fibers of $\pi_A$  are irreducible and isomorphic, hence $J_AY$ is irreducible.  Moreover, for any small extension $A'\to A$ with ideal $I$, $J_{A'}Y$ has a canonical action over $J_AY$ by $\pi_A^*T_Y\otimes_\C I$ coming from the isomorphism $A'\times_A A'\to k\langle I\rangle\times_\C A':(x,y)\mapsto (x-y,y)$ where $k\langle I\rangle$ is the ring of dual numbers with tangent space $I^\vee$.  For any section $\theta$ of $\pi^*_AT_Y\otimes_\C I$ we denote its action by $t_\theta:J_{A'}Y\to J_{A'}Y$, so for a point $\xi:\Spec(A)\to Y$ of $J_AY$ the pull-back map along $t_\theta(\xi)$ on functions is given by $t_\theta(\xi)^*=\xi^*+\theta:\cO_X\to A$.  This makes $J_{A'}Y\to J_AY$ into a torsor in the Zariski topology, since nilpotent thickenings of affine schemes lift through smooth morphisms and therefore $J_{A'}Y\to J_AY$ has a section Zariski locally.  In particular, $\pi_A:J_AY\to Y$ is affine.  

For any algebraic (resp. analytic) function $g$ on $Y$ and $\alpha\in A^\vee$ (the dual as a complex vector space), we obtain an algebraic (resp. analytic) function $d^\alpha g$ on $J_AY$ which evaluates on a map $\xi:\Spec(A)\times S\to Y$ as $(\alpha\otimes\mathrm{id})(\xi^*g)\in\cO_S(S)$.  Clearly for any algebraic or analytic map $f:Y'\to Y$ and the induced map $J_Af:J_AY'\to J_AY$ we have $(J_Af)^*d^\alpha g=d^\alpha (f^*g)$.

The following lemma says in particular that for smooth $Y$, up to passing to a Zariski cover, any map of jet spaces admits a section through any point.
\begin{lemma}
Suppose $Y$ is affine and $q:Y\to \A^n$ is \'etale.  Let $z_1,\ldots,z_n$ be the pullbacks of the coordinates on $\A^n$.  Then:
\begin{enumerate}
    \item For any surjective homomorphism $A'\to A$ of artinian rings, the resulting map $p:J_{A'}Y\to J_AY$ has a section through any point of $J_{A'}Y$.
    \item If moreover $A'\to A$ is small with ideal $I$ with $\dim_\C I=1$ we have $p_*\cO_{J_{A'}Y}\cong \cO_{J_AY}[d^{\alpha}z_1,\ldots,d^\alpha z_n]$ for any $\alpha\in A^\vee$ which is nonzero on $I$.
\end{enumerate}
\end{lemma}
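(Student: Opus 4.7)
The plan is to use the formal étaleness of $q:Y\to\A^n$ to reduce both parts of the lemma to the trivial case $Y=\A^n$, where the jet spaces are simply affine spaces on $\C$-vector spaces of the form $A^n$.

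First I would establish, for any artinian $\C$-algebra $A$, the canonical isomorphism
\[
J_A Y \cong Y \times_{\A^n} J_A \A^n,
\]
compatible with the transition maps as $A$ varies.  This follows from the formal étaleness of $q$: an $S$-point of $J_A Y$ is a map $\Spec(A)\times S \to Y$, which by the étale lifting property is uniquely determined by its composition to $\A^n$ together with a lift of its restriction along the nilpotent thickening $S \hookrightarrow \Spec(A)\times S$.  Under this identification, $p:J_{A'}Y\to J_AY$ is the pullback along $Y\to\A^n$ of $J_{A'}\A^n\to J_A\A^n$.  A direct calculation with the functor of points then shows that $J_A\A^n$ is canonically the affine space on the $\C$-vector space $A^n$, and $J_{A'}\A^n\to J_A\A^n$ is the affine-linear map induced by the surjection $(A')^n\to A^n$.

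For part~(1), I would pick any $\C$-linear splitting $\sigma:A\to A'$ of $A'\to A$; its $n$-fold product gives a $\C$-linear section of $(A')^n\to A^n$, hence a section $s_0$ of $J_{A'}\A^n\to J_A\A^n$.  Given any target point $\eta$ with $\xi=p(\eta)$, translating $s_0$ by the constant element $\eta-s_0(\xi)\in(\ker)^n$—using the commuting vector group action on fibers—produces a section through $\eta$.  Pulling back along $Y\to\A^n$ gives the required section of $p$.  For part~(2), with $\dim_\C I=1$ the fibers of $p$ are $n$-dimensional affine spaces, trivialized by any such splitting.  For $\alpha\in(A')^\vee$ nonzero on $I$, the function $d^\alpha z_i$ on $J_{A'}Y$ pulls back from $d^\alpha w_i$ on $J_{A'}\A^n$ (where $w_i$ are the coordinates on $\A^n$), using $z_i=q^*w_i$ together with the naturality $(J_{A'}q)^*d^\alpha w_i=d^\alpha z_i$ recorded in the excerpt.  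On a fiber over $(a_1,\ldots,a_n)\in A^n$, a lift of the form $\sigma^n(a)+(c_1,\ldots,c_n)$ with $c_i\in I$ satisfies $d^\alpha z_i=\alpha(\sigma(a_i))+\alpha|_I(c_i)$, so these $n$ functions give fiberwise affine coordinates; the polynomial-algebra presentation $p_*\cO_{J_{A'}Y}\cong\cO_{J_AY}[d^\alpha z_1,\ldots,d^\alpha z_n]$ follows.

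The main obstacle is really just bookkeeping: tracking the naturality of $d^\alpha$ under the étale map so that the affine-space structure and coordinate functions can be transported from $J_A\A^n$, where everything is manifest, to $J_AY$.  Once the cartesian diagram coming from formal étaleness is in hand, both statements reduce to elementary linear algebra in $A$ and $A'$.
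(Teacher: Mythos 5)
Your proposal is correct and follows essentially the same route as the paper: both parts reduce to $\A^n$ via the unique lifting of nilpotent thickenings through the \'etale map $q$, which is exactly the content of your cartesian square $J_AY\cong Y\times_{\A^n}J_A\A^n$. The only cosmetic difference is in part (2), where the paper phrases the fiberwise linear algebra through the torsor action of $\pi_A^*T_Y\otimes_\C I$ and the rule $t_\theta^*d^\alpha g=d^\alpha g+\pi_A^*\alpha(\theta g)$, trivializing $T_Y$ by the $\partial/\partial z_i$, whereas you compute directly on the fibers of $(A')^n\to A^n$; the underlying computation is identical.
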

\begin{proof}
For the first part, any point of $J_AY$ can be pushed down to $\AA^n$, translated to a section of $J_A\AA^n$, and lifted (uniquely) to $J_AY$, as maps from nilpotent thickenings of affine schemes lift uniquely through \'etale maps.  For the second part, its enough to observe that the natural derivations $\partial_i:=\partial/\partial z_i$ give a trivialization of $\pi_A^*T_Y\otimes_\C I$ and $t_\theta^*d^\alpha g=d^\alpha g+\pi_A^*\alpha(\theta g) $, so the $d^\alpha z_i$ necessarily give the linear affine coordinates of a trivialization $J_{A'}Y\cong J_A Y\times\A^n$ of the torsor structure. 
\end{proof}

\begin{lemma}
Suppose we have a section $\sigma:X\to J_AX$ of $\pi_A:J_AX\to X$.  Then for any function $g$ on $X^\an$, $(\sigma^\an)^*d^\alpha g$ is in the subsheaf of $\cO_{X^\an}$ generated over $\cO_X$ by $(\sigma^\an)^*g$ and its algebraic derivatives.
\end{lemma}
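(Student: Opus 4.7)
By the universal property of $J_A X$, the section $\sigma$ corresponds to a $\C$-morphism $\tilde\sigma: \Spec(A) \times X \to X$ restricting to the identity on $\Spec(\C) \times X$, equivalently to an algebra homomorphism $\tilde\sigma^*: \cO_X \to A \otimes_\C \cO_X$ whose reduction modulo the augmentation ideal $\mathfrak{m}_A = \ker(A \to \C)$ is the identity. Analytifying gives $\tilde\sigma^{\an*}: \cO_{X^\an} \to A \otimes_\C \cO_{X^\an}$, and chasing through the definition of $d^\alpha g$, one has
\[
(\sigma^\an)^* d^\alpha g = (\alpha \otimes \id)(\tilde\sigma^{\an*} g).
\]
So the lemma reduces to computing $\tilde\sigma^{\an*} g \in A \otimes_\C \cO_{X^\an}$ and showing that it lies in $A \otimes_\C L$, where $L \subset \cO_{X^\an}$ is the $\cO_X$-submodule generated by iterated algebraic derivatives of $g$.

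I would then work locally on an affine \'etale chart $q: U \to \A^n$ with coordinates $z_1, \ldots, z_n$ and derivations $\partial_i = \partial/\partial z_i$. The section condition forces $\tilde\sigma^* z_i = z_i + r_i$ with $r_i \in \mathfrak{m}_A \otimes_\C \cO_U$, hence nilpotent since $A$ is artinian. The heart of the proof is the Taylor-type identity
\[
\tilde\sigma^{\an*} g = \sum_{\beta \in \N^n} \frac{r^\beta}{\beta!}\, \partial^\beta g,
\]
a \emph{finite} sum by nilpotence of the $r_i$. One verifies that the right-hand side defines an algebra homomorphism $\cO_{U^\an} \to A \otimes_\C \cO_{U^\an}$ using the Leibniz rule combined with the binomial identity $r^{\gamma+\delta}/(\gamma!\delta!) = \binom{\beta}{\gamma} r^\beta/\beta!$; it agrees with $\tilde\sigma^{\an*}$ on each $z_i$, so the two agree on $\cO_U$ by infinitesimal lifting through the \'etale morphism $q$, and extend to analytic $g$ because $\tilde\sigma^{\an*} g = g \circ \tilde\sigma^\an$ can be evaluated pointwise as the Taylor polynomial of $g$ at the infinitesimally near point $z + r$, which matches the right-hand side.

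Applying $\alpha \otimes \id$ yields
\[
(\sigma^\an)^* d^\alpha g = \sum_\beta \frac{\alpha(r^\beta)}{\beta!}\, \partial^\beta g,
\]
in which each coefficient $\alpha(r^\beta)/\beta!$ lies in $\cO_U$ and each $\partial^\beta g$ is an iterated algebraic derivative of $g$, completing the proof. The only step that warrants real attention is the extension of the Taylor identity from algebraic to analytic $g$; this is ultimately formal, since $\tilde\sigma^\an$ displaces the algebraic coordinates only by nilpotent amounts, so the convergent Taylor expansion of an analytic function truncates to the finite polynomial above, and the algebraic identity suffices.
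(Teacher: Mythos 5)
Your proposal is correct and takes essentially the same route as the paper: both work on an affine \'etale chart $q:X\to\A^n$ and reduce the claim to the finite Taylor expansion $(\xi^\an)^*g=\sum_\beta \tfrac{r^\beta}{\beta!}\partial^\beta g$, which truncates because the displacement of the coordinates lies in $\mathfrak{m}_A\otimes\cO_X$ and is nilpotent. The paper justifies this identity by pulling back a ``universal Taylor series'' on the formal neighborhood of the diagonal in $X\times X$, while you verify it directly as an algebra homomorphism agreeing on the $z_i$ via \'etale lifting; this is only a cosmetic difference.
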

\begin{proof}The claim can be checked Zariski locally on $X$, so we may assume $X$ is affine with an \'etale map $q:X\to\A^n$.  Let $z_i$ be the pullbacks of the coordinate functions and $\partial_i$ the associated derivations as in the previous lemma, so that the functions $z_i-z_i(y)$ generate $m_y$ at every point $y\in X$.  We then have a universal Taylor series in the following sense.  Denoting the $i$th projection $\pi_i:X\times X\to X$ we have
\begin{equation}\label{taylor}\pi_1^*g-\pi_2^*g=\sum_{J}\frac{\pi_2^*(\partial_Jg)}{J!}(\pi_1^*z-\pi_2^*z)^J\;\;\; \mbox{in}\;\;\;\varprojlim_k\cO_{X\times X}/ I_{\Delta}^k.\end{equation}
in the obvious notation, where $\Delta\subset X\times X$ is the diagonal.  The same formula holds for analytic $g$ as well.

Suppose $\sigma$ is given by a map $\xi:\Spec(A)\times X\to X$ and let $p_2:\Spec(A)\times X\to X$ be the second projection.  Then $\xi\times p_2:\Spec(A)\times X\to X\times X$ factors through $\Spec(\cO_{X\times X}/I_\Delta^k)$ for some $k$, and the claim follows from \eqref{taylor} since we then have
\[(\xi^\an)^*g-1\otimes g=\sum_{J}\frac{1\otimes\partial_Jg}{J!}(\xi^*z-1\otimes z)^J\;\;\; \mbox{in}\;\;\; A\otimes_\C\cO_{X^\an}.\]
\end{proof}
\begin{corollary}\label{jet lift}
Suppose we have a section $\sigma:X\to J_AX$ of $\pi_A:J_AX\to X$ and a commutative diagram
\[\begin{tikzcd}
J_A\tilde X\ar{d}{\tilde \pi_A}\ar{r}{J_A\phi}&J_AY^\an\ar{d}{ \pi_A^\an}\\
\tilde X\ar{r}{\phi}\ar[bend left]{u}{\tilde\sigma}&Y^\an
\end{tikzcd}
\]
where $\phi$ (hence $J_A\phi$) is definable $\pi_1(X^\an,x)$-equivariant.  Then $K_\partial(J_A\phi \circ\tilde\sigma)\subset K_\partial(\phi)$.
\end{corollary}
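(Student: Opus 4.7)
The plan is to reduce the pullback of any regular function on $J_A Y$ along $J_A\phi \circ \tilde\sigma$ to expressions involving $\phi$-pullbacks of functions from $Y$ together with their algebraic derivatives. Since $K_\partial(\phi)$ is closed under algebraic derivations of $X$ by construction, it suffices to show that for each rational function $F$ on $J_A Y$ regular along the image of $J_A\phi\circ\tilde\sigma$, the pullback $(J_A\phi\circ\tilde\sigma)^*F$ lies in $K_\partial(\phi)$. After shrinking $Y$ to a suitable Zariski open, I may assume $F$ is a regular function on $J_A Y$.

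Next I would exploit the affine-bundle structure of $\pi_A : J_A Y \to Y$. Choose a filtration $A = A_n \to A_{n-1} \to \cdots \to A_0 = \C$ by small extensions and iterate the preceding small-extension structure lemma: étale-locally on $Y$, with étale coordinates $z_1,\ldots,z_n$, every regular function on $J_A Y$ is a polynomial in expressions of the form $d^{\beta}z_i$ (for various $\beta \in A^\vee$ arising from the filtration) with coefficients in $\pi_A^*\cO_Y$. Thus the problem reduces to showing that $(J_A\phi\circ\tilde\sigma)^*\pi_A^*h = \phi^*h$ and $(J_A\phi\circ\tilde\sigma)^* d^\beta z_i$ both lie in $K_\partial(\phi)$ for $h$ and $z_i$ regular on $Y$. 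The former is immediate.

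For the latter, naturality of $d^\beta$ under pullback gives $(J_A\phi)^*d^\beta z_i = d^\beta(\phi^*z_i)$, and using the section identity $\tilde\pi_A\circ\tilde\sigma = \mathrm{Id}_{\tilde X}$ one computes
\[
(J_A\phi\circ\tilde\sigma)^*d^\beta z_i = \tilde\sigma^* d^\beta(\phi^*z_i).
\]
I would then apply the previous lemma to $\phi^*z_i$, viewed as an analytic function on $\tilde X$, and to the section $\tilde\sigma$ of $\tilde\pi_A$: this shows that $\tilde\sigma^* d^\beta(\phi^*z_i)$ lies in the subsheaf of $\cO_{\tilde X}$ generated over $\cO_X$ by $\phi^*z_i$ and its algebraic derivatives. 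All of these lie in $K_\partial(\phi)$, yielding the desired containment.

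The main technical point is verifying that the universal Taylor series computation of the previous lemma transfers from a section $\sigma : X \to J_A X$ to the section $\tilde\sigma : \tilde X \to J_A\tilde X$. Its proof is entirely local and analytic — one uses étale coordinates on $X$ pulled back to $\tilde X$ and exploits that algebraic vector fields on $X$ lift canonically to $\tilde X$ — so no new idea is needed, but the bookkeeping must be carried out carefully to make sense of what "algebraic derivatives" means for analytic functions on the universal cover.
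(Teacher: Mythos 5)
Your argument is correct and is exactly the deduction the paper intends: the corollary is stated without proof as an immediate consequence of the two preceding lemmas, namely the description of $p_*\cO_{J_{A'}Y}$ in terms of the $d^\alpha z_i$ (iterated over a filtration of $A$ by small extensions) and the universal Taylor series lemma applied to $g=\phi^*z_i$, combined with the fact that $K_\partial(\phi)$ is by definition closed under algebraic derivations. Your observation that the Taylor series computation transfers verbatim from $\sigma$ to $\tilde\sigma$ (since it is local-analytic and the relevant derivations are algebraic vector fields lifted from $X$) is the right way to close the one small gap in the literal statement of the lemma.
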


\begin{proof}[Proof of \cref{KV in Kphi}]
Observe that we are free to replace $X$ with a dense Zariski open subset. Note that if $Z\subset Y$ is a closed $\G$-invariant subvariety then $\phi^{-1}(Z)$ is a definable closed $\pi_1(X)$-invariant subset of $\tilde{X}$, and therefore descends to a definable analytic subvariety of $X$, which is algebraic by   definable Chow \cite{definechow}. Thus, we may also freely replace $Y$ with a $\G$-invariant dense Zariski open subset (at the cost of replacing $X$ with a dense Zariski open subset).  By \cref{jet lift} we may further replace $X$ with some jet space $J_AX$ and $\phi$ with $J_A\phi$ after passing to a Zariski open on which $J_AX\to X$ admits a section through every point.  
\begin{lemma}
Suppose $\G$ acts with finite kernel on a smooth irreducible algebraic variety $Y$.  Then for some jet space $J_AY$, the induced action of $\G$ on $J_AY$ has finite stabilizers.
\end{lemma}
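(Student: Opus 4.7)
The proof will proceed by induction on $r$, the dimension of the generic $\G$-stabilizer on $Y$. In the base case $r = 0$, upper semicontinuity of $\dim \G_y$ gives a $\G$-invariant dense open of $Y$ on which every stabilizer is finite, so $A = \C$ works. For the inductive step $r > 0$, I will produce an artinian $\C$-algebra $A_0$ such that the generic $\G$-stabilizer on $J_{A_0} Y$ has dimension strictly less than $r$; the variety $J_{A_0} Y$ is smooth irreducible as a Zariski-locally trivial affine bundle over $Y$ (by the preceding jet-space lemma), and $\G$ still acts with finite kernel, since an element acting trivially on $J_{A_0} Y$ acts trivially on its zero-jet subvariety $Y$. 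The inductive hypothesis then yields $A'$ with finite generic stabilizer on $J_{A'}(J_{A_0} Y) = J_{A' \otimes_\C A_0}(Y)$, closing the induction.

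To produce $A_0$, fix a sufficiently general $y \in Y$ and set $H = \G_y^0$, so $\dim H = r > 0$. The kernel of the action $H \to \Aut(\hat{\cO}_{Y,y})$ is finite: any element acting trivially on $\hat{\cO}_{Y,y}$ restricts to the identity on $\cO_{Y,y}$, hence on $K(Y)$, hence (since $Y$ is integral) on $Y$, and so lies in the finite kernel of $\G$ on $Y$. The kernels of the truncated maps $H \to \Aut(\cO_{Y,y}/m_y^{N+1})$ form a decreasing chain of closed subgroups with intersection this finite kernel, so by Noetherianity the chain stabilizes, and for some $N$ the action of $H$ on $\cO_{Y,y}/m_y^{N+1}$ already has finite kernel and is in particular non-trivial. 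Set $A_0 = \C[t]/t^{N+1}$. In local coordinates $z_1, \dots, z_d$ at $y$, the maps $\xi_c : z_i \mapsto c_i t$ for $c \in \C^d$ already separate $\cO_{Y,y}/m_y^{N+1}$, because a polynomial $f$ with $f(c_1 t, \dots, c_d t) = 0$ for all $c$ must vanish. Consequently, for any $h \in H$ with $h^* \ne \mathrm{id}$ on $\cO/m^{N+1}$, pick $g \in \cO/m^{N+1}$ with $h^* g \ne g$; the condition $\xi(h^* g - g) \ne 0$ is a nonempty Zariski-open condition on $\xi \in J_{A_0}(Y)_y$, so a generic such $\xi$ satisfies $h \cdot \xi \ne \xi$. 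Thus $H$ has a positive-dimensional orbit on $J_{A_0}(Y)_y$, and upper semicontinuity of $H$-stabilizer dimension yields a dense open of $J_{A_0}(Y)_y$ on which $\dim H_\xi < r$.

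To convert this fiberwise bound into the desired global bound, observe that for any $\xi \in J_{A_0} Y$ lying over $y$ one has $\G_\xi = (\G_y)_\xi$, and since the component group of the algebraic group $\G_y$ is finite, $\dim \G_\xi = \dim H_\xi$. Choosing a generic $y$ and a generic $\xi$ over $y$ with $\dim H_\xi < r$ produces a point $\xi \in J_{A_0} Y$ with $\dim \G_\xi < r$; upper semicontinuity of $\dim \G_\xi$ on $J_{A_0} Y$ then forces the generic $\G$-stabilizer there to have dimension less than $r$, as required. The main delicate point is precisely this last passage from a fiberwise to a global stabilizer bound, which hinges on the orbit-stabilizer identity together with the finiteness of the component group of $\G_y$; the remainder of the argument is a routine Noetherian/separation exercise.
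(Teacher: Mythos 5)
Your proof is correct, and it globalizes differently from the paper's. The shared core is the observation that the stabilizer $\G_y$ acts with finite kernel on $\hat{\cO}_{Y,y}$ (because $Y$ is irreducible and the $\G$-action on $Y$ has finite kernel) and hence, by the descending chain condition on closed subgroups, on some truncation $\cO_{Y,y}/m_y^{n}$. From there the paper takes $A=\cO_{Y,y}/m_y^{n}$ itself and works on the open locus $J^o_AY$ of jets that are local isomorphisms: there the stabilizer of any jet over $y$ is exactly $\ker(\G_y\to\Aut(\cO_{Y,y}/m_y^n))$, so all stabilizers over $y$ are finite at once, and a single Noetherianity argument on the increasing constructible sets $Y_n\subset Y$ produces one $n$ that works for every $y$ simultaneously. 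You instead take the curvilinear algebra $A_0=\C[t]/t^{N+1}$, use the (correct) fact that linear arcs separate $\cO_{Y,y}/m_y^{N+1}$ to exhibit a single jet over a general $y$ whose stabilizer has dimension $<r$, propagate this by upper semicontinuity of $\dim\G_\xi$ to the generic point of $J_{A_0}Y$, and then iterate via $J_{A'}(J_{A_0}Y)=J_{A'\otimes_\C A_0}Y$, inducting on the generic stabilizer dimension. Your route costs a little more bookkeeping (checking that smoothness, irreducibility and finite kernel persist for jet spaces, and the fiberwise-to-global step via $\G_\xi=(\G_y)_\xi$ together with finiteness of $\G_y/\G_y^0$) but buys a smaller, more concrete artinian algebra and avoids the constructibility argument over $Y$; it yields finite stabilizers only on a dense $\G$-invariant open of $J_AY$ rather than on an open locus surjecting onto $Y$, but that is all the application requires, since one is free there to shrink $Y$ to a $\G$-invariant dense Zariski open.
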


\begin{proof}
Choose a point $y\in Y$ and let $\mathbf{S}_y$ be the stabilizer of $y$.  Then $\mathbf{S}_y$ acts with finite kernel on $\hat\cO_{Y,y}$, and therefore on some artinian quotient $\cO_{Y,y}/m_{y}^n$.  Take $A_n=\cO_{Y,y}/m_{y}^n$. Let $J^o_{A_n}Y$ denote the open subspace consisting of jets which are local isomorphisms. Note that all the points in a single fiber of  $J^o_{A_n}Y\ra Y$ have the same stabilizer. Thus if we let $Y_n$ denote those points of $Y$ whose fibers in $J^o_{A_n}Y$ have finite stabilizer, the $Y_n$ form an increasing chain of constructible sets whose union is all of $Y$. The claim follows by Noetherianity.

\end{proof}
Thus, passing to a $\G$-invariant Zariski open of an appropriate jet space we may assume the action of $\G$ on $Y$ has finite stabilizers.  The quotient $Y\to Z:=\G\backslash Y$ exists as an algebraic space, so by passing to a further $\G$-invariant Zariski open of $Y$ we may assume that there is a quotient $Y\to Z:=\G\backslash Y$ as a scheme.  The map $Y\to Z$ admits a section over a finite dominant map $Z'\to Z$, and therefore its base-change is a finite map $Y':=Y\times_Z Z'\to Y$ and $Y'$ is identified in a $\G$-equivariant way with $\G\times Z'$ where $\G$ acts by left multiplication on $\G$ and the identity on $Z'$.  After shrinking $X$, we may assume $Z'\to Z$ is in addition \'etale, and as $\tilde X$ is simply connected, the composition $\tilde X\xrightarrow{\phi} Y^\an\to Z^\an$ can be lifted through $Z'\to Z$, and the product map is a $\pi_1(X^\an,x)$-equivariant map $\phi':\tilde X\to Y'^\an$ which is definable by definable choice.  By \cref{et lift} we have $\bar{K_\partial(\phi)}=\bar{K_\partial(\phi')}$.  By definable Chow, functions pulled back from $Z'$ are in $K(X)\subset K_\Def(\tilde X)$, so we may assume $Y=\G$ with the left action by $\G$.  Finally, we have $\G\subset\mathbf{End}(U)$, so clearly we may assume $\G=\GLGL(\End(U))$ and $Y=\mathbf{End}(U)$ with the action of $\G$ by left multiplication.  We will show that in this case $K(V)=K_\partial(\phi)$.

As left $\G(\C)$-modules, with this action we have $\End(U)\cong U_0^
\vee\otimes_\C U$.  We then identify the coordinate ring $R[\mathbf{End}(U)]$ with $\Sym^*\End(U)^\vee$ as left $\G(\C)$-modules.  Thus, $R[\mathbf{End}(U)]$ admits a surjection (of $\G(\C)$-modules) from a direct some of tensor powers of $U^\vee$. It follows that the pullback of any function to $K_{\Def}(\tilde{X})$ is contained in a $\pi_1(X,x)$ local system which is a quotient of some tensor power of $U^
\vee$. 

It follows by \cref{basic RH} and \cref{lem: monofactor} that we have $K_\partial(\phi)\subset K(V)$.

On the other hand, by taking the linear coordinates we see that the condition in \cref{condition} is met.  Indeed, from the inclusion $\End(U)^\vee\subset R[\G]$, the natural map of left $\G(\C)$-modules
\[U^\vee \to\C[\G]\otimes_\C U_0^\vee\]
evaluates to $\mathrm{id}_{U_0^\vee}$ at the identity.  Thus $K(V)\subset K_\partial(\phi)$.

\end{proof}

\section{Periods of algebraic varieties}

In this section we relate the previous sections work on local systems to integrals of algebraic forms in families of varieties.

\subsection{Background on periods}  We begin with an informal but more detailed discussion of algebraic de Rham cohomology classes and their periods to explain how the main ideas of the previous sections can be used to understand them.

\subsubsection{Smooth de Rham cohomology}
Let $X$ be a smooth complex algebraic variety.  The smooth Poincar\'e resolution is acyclic so we may naturally think of the cohomology $H^k(X^\an,\C)$ as classes of closed smooth $k$-forms.
\subsubsection{Analytic de Rham cohomology} In the analytic category we have a resolution 
\[0\to \C_{X^\an}\to\cO_{X^\an}\xrightarrow{d}\Omega_{X^\an}^1\xrightarrow{d}\cdots\]
and therefore we obtain a natural isomorphism $\mathbb{H}^k(X^\an,\Omega^\bullet_{X^\an})\cong H^k(X^\an,\C)$ where
\[\Omega^\bullet_{X^\an}:=\left[\cO_{ X}\xrightarrow{d}\Omega^1_{X^\an}\xrightarrow{d}\cdots\xrightarrow{d} \Omega_{ X^\an}^{\dim X-1}\xrightarrow{d}\Omega^{\dim X}_{X^\an}\right].\]
We can think of the hypercohomology $\mathbb{H}^k(X^\an,\Omega^\bullet_{X^\an})$ in terms of \v{C}ech cohomology with respect to a covering by Stein open sets, and any \v Cech cocycle can be solved by a smooth form.  Alternatively, the Dolbeault resolution provides an acyclic resolution of $\Omega_{X^\an}^\bullet$, and in this way a hypercohomology class may be directly despresented by the class of a smooth form.  Either way, integrating such a form provides the explicit isomorphism.  

\subsubsection{Algebraic de Rham cohomology}
In the algebraic category we have a complex 
\[\Omega^\bullet_{X}:=\left[\cO_{ X}\xrightarrow{d}\Omega^1_{X}\xrightarrow{d}\cdots\xrightarrow{d} \Omega_{ X}^{\dim X-1}\xrightarrow{d}\Omega^{\dim X}_{X}\right].\]
By a theorem of Grothendieck, the natural map $\mathbb{H}^k(X,\Omega_X^\bullet)\to \mathbb{H}^k(X^\an,\Omega^\bullet_{X^\an})$ obtained by analytifying a \v Cech cocycle with respect to an affine open cover is an isomorphism.  We denote the composed isomorphism
\begin{equation}\label{comparison}\int_X:\mathbb{H}^k(X,\Omega_{X}^\bullet)\to H^k(X^\an,\C).\end{equation}
There are some circumstances in which algebraic de Rham cohomology classes can be thought of directly in terms of certain algebraic forms and for which the map $\int_X$ is the usual notion of integration along a cycle; see section \ref{sect integration} for some discussion.  In general the map can always be expressed in terms of integrating a \v{C}ech cocycle representative along the facets of a suitable singular cycle representative (see for instance Example \ref{eg 1forms}).  There is a natural map \[\ker\left(H^0(X,\Omega_X^k)\xrightarrow{d}H^0(X,\Omega_X^{k+1})\right)\to \mathbb{H}^k(X,\Omega_X^\bullet)\] 
associating to a closed algebraic $k$-form the obvious de Rham cohomology class, and in the image $\int_X$ is the usual integration map.
\subsubsection{De Rham cohomology in families}
Let $f:X\to S$ be an algebraic family which is real analytically locally trivial.  We can form the local system $V=R^k(f^\an)_*\C_{X^\an}$ whose fibers are the cohomologies $H^k(X_t^\an,\C)$, identified locally on $S^\an$ via a local trivialization of $f$.  There is an associated analytic flat vector bundle $(V_{\cO_{S^\an}},\nabla)$ which by the relative version of the analytic de Rham complex is computed as $R^k(f^\an)_*\Omega_{X^\an/S^\an}^\bullet$, where the connection is given by lifting vector fields and taking the Lie derivative.  The Lie derivative along an algebraic tangent field is algebraic, so $R^kf_*\Omega_{X/S}$ is naturally an algebraic flat vector bundle, and it is in fact the canonical algebraic structure on $V_{\cO_{S^\an}}$ guaranteed by the Riemann--Hilbert correspondence \cite{deligneRH}.  The  resulting isomorphism
\[\cO_{S^\an}\otimes_{\cO_{S^\an}} V_{\cO_S}\to \cO_{S^\an}\otimes_{\C_{S^\an}}V\]
fiberwise restricts to the integration isomorphism of section \eqref{comparison}.

We now describe the flat coordinates of algebraic sections.  Around any point $s\in S$ we can take a neighborhood $s\in U\subset S^\an$ and a trivialization $X^\an|_U\cong X_s\times U$ restricting to the identity at $s$ which we think of as a family of diffeomorphisms $f_u:X^\an_u\to X^\an_s$, and the flat continuation of a class $\beta\in H^k(X^\an,\C)$ is given by $\beta(u):=f_u^*\beta\in H^k(X^\an_u,\C)$.  Likewise we can continue a basis of cycles $\gamma_i\in H_k(X^\an_s,\C)$ as $\gamma_i(u):=(f_u^{-1})_*\gamma_i$.  The above isomorphism then maps an algebraic de Rham cohomology class $\alpha$ to
\[1\otimes \alpha\mapsto \sum_i \left(\int_{\gamma_i(u)}\alpha|_{X_u}\right)\otimes \gamma_i^\vee(u).  \]
The integrals $\int_{\gamma_i(u)}\alpha|_{X_u}$ therefore generate $K(V)$ for $V=R^k(f^\an)_*\C_{X^\an}$. Again, these functions can ultimately be interpreted in terms of integrals of algebraic forms, and at the very least we have the following:
\begin{corollary}\label{cor:geomperiods}
    Let $f:X\ra S$ be a family of smooth varieties which is real analytically locally trivial, $k\geq 0$, and let $V$ be the local system $R^k(f^\an)_*\C_{X^\an}$ on $S^{\an}$. Then the periods of a regular fiberwise closed $k$-form on $X$ along some locally constant $k$-dimensional homology class lie in $K(V)$.
\end{corollary}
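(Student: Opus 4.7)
The plan is to realize the period integrals directly as flat coordinates of an algebraic section of the relative de Rham bundle, and then invoke the definition of $K(V)$. First I would observe that a regular fiberwise closed $k$-form $\alpha$ is an element of $\ker\!\left(H^0(X,\Omega^k_{X/S})\xrightarrow{d_{X/S}}H^0(X,\Omega^{k+1}_{X/S})\right)$, hence via the natural map from closed $k$-forms to hypercohomology gives rise to a class $[\alpha]\in H^0(S,R^kf_*\Omega^\bullet_{X/S})$, that is, a global algebraic section of the relative algebraic de Rham cohomology bundle.

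Next I would invoke the identification, recalled in the preceding subsection, of $R^kf_*\Omega^\bullet_{X/S}$ with the canonical algebraic structure on the flat vector bundle $(V_{\cO_{S^\an}},\nabla)$ attached by the Riemann--Hilbert correspondence to $V=R^k(f^\an)_*\C_{X^\an}$. Under this identification, $[\alpha]$ is an algebraic section of $V_{\cO_S}$. By the explicit formula for the integration isomorphism spelled out just above the corollary, the image of $1\otimes [\alpha]$ in $\cO_{S^\an}\otimes_{\C_{S^\an}} V$ is
\[
\sum_i \left(\int_{\gamma_i(u)}\alpha|_{X_u}\right)\otimes \gamma_i^\vee(u),
\]
where $\{\gamma_i\}$ is a locally constant basis of $V^\vee$ near $s$. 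In other words, the period functions $u\mapsto \int_{\gamma_i(u)}\alpha|_{X_u}$ are exactly the entries, in a flat frame, of an algebraic section of $V_{\cO_S}$.

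Finally, by the very definition of the Riemann--Hilbert field, $K(V)$ is generated over $K(S)$ by the evaluations $\langle s,v\rangle$ of algebraic sections $s$ of $V_{\cO_S}$ against flat sections $v$ of $V^\vee$. Taking $s=[\alpha]$ and $v=\gamma_i^\vee(u)$ shows $\int_{\gamma_i(u)}\alpha|_{X_u}\in K(V)$. By linearity the same holds for any locally constant $k$-dimensional homology class $\gamma=\sum c_i\gamma_i(u)$, yielding the corollary.

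The only nontrivial input is the identification of the analytic trivialization of $R^kf_*\Omega^\bullet_{X/S}$ with the fiberwise integration pairing $\alpha\mapsto\int_\gamma\alpha$; this is the relative version of Grothendieck's comparison theorem and is precisely the content of the discussion preceding the corollary. Once that is in hand, the proof is essentially a reading of definitions.
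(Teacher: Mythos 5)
Your proposal is correct and follows exactly the route the paper takes (the corollary is stated as a direct consequence of the preceding discussion): the fiberwise closed form maps to an algebraic section of $R^kf_*\Omega^\bullet_{X/S}$, which is the canonical algebraic structure on $V_{\cO_{S^\an}}$, and its flat coordinates are the periods $\int_{\gamma_i(u)}\alpha|_{X_u}$, which lie in $K(V)$ by definition of the Riemann--Hilbert field. The only input you rely on beyond definitions --- that the comparison isomorphism restricts to honest integration on classes coming from closed forms --- is precisely what the paper records in its discussion of \eqref{comparison}, so there is nothing to add.
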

The Hodge filtration $F^\bullet$ is in fact an algebraic filtration on $R^kf_*\Omega^\bullet_{X/S}$.  In the case that $f$ is smooth projective, it is given by truncating the complex $\Omega_{X/S}^\bullet$.

We shall need the following lemma for the proof of Theorem \ref{thm:intro main}
\def\g{\frak{g}}
\def\h{\frak{h}}
\def\H{\mathbf{H}}
\def\gr{\operatorname{gr}}
\begin{lemma}\label{lem:tensor}
Let $\G,\H$ be algebraic groups with $\G$ semisimple, and let $V$ be an indecomposable representation of $\G\times \H$. Then $V\cong V_1\otimes V_2$ for $V_1,V_2$ representations of $\G,\H$ respectively.
    
\end{lemma}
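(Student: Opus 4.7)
The plan is to exploit the semisimplicity of $\G$ to decompose $V$ into $\G$-isotypic components, observe that the commuting $\H$-action preserves this decomposition, and then conclude via indecomposability that $V$ consists of a single isotypic component, which is automatically a tensor product.

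First, I would restrict $V$ to $\G\times\{1\}$ and use semisimplicity of $\G$ to write $V=\bigoplus_{W} V_W$, where $W$ ranges over the isomorphism classes of irreducible algebraic representations of $\G$ and $V_W$ is the $W$-isotypic component. Concretely $V_W$ is the sum of all $\G$-subrepresentations of $V$ isomorphic to a direct sum of copies of $W$; this description depends only on the $\G$-action.

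Second, I would observe that each $V_W$ is stable under $\H$. Indeed, for $h\in\H$ the operator acting by $h$ commutes with the $\G$-action, so it sends any $\G$-subrepresentation to an isomorphic $\G$-subrepresentation, hence preserves the intrinsic characterization of $V_W$. Therefore $V=\bigoplus_W V_W$ is a decomposition in the category of $\G\times\H$-representations. Since $V$ is indecomposable as a $\G\times\H$-representation, exactly one $V_W$ is nonzero; call the associated irreducible $\G$-representation $V_1:=W$.

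Third, I would exhibit the canonical isomorphism. Let $V_2:=\Hom_\G(V_1,V)$; since $\H$ commutes with $\G$, post-composition gives $V_2$ the structure of an algebraic $\H$-representation (and it is finite-dimensional because $V$ is). The evaluation map
\[
V_1\otimes V_2\;\longrightarrow\; V,\qquad w\otimes\phi\;\longmapsto\;\phi(w),
\]
is $\G$-equivariant by construction and $\H$-equivariant by the definition of the $\H$-action on $V_2$; letting $\H$ act trivially on $V_1$, it is $(\G\times\H)$-equivariant. Because $V$ is $V_1$-isotypic, this map is an isomorphism, yielding $V\cong V_1\otimes V_2$ as required.

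I expect no serious obstacle: the whole argument is the standard isotypic decomposition, and the only point needing a line of justification is that $\Hom_\G(V_1,V)$ is genuinely an algebraic representation of $\H$, which follows because $\Hom_\C(V_1,V)$ is an algebraic $\G\times\G\times\H$-module on which taking $\G$-invariants with respect to the diagonal leaves a functorial algebraic $\H$-module structure.
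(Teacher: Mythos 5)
Your proposal is correct and follows essentially the same route as the paper: decompose $V$ into $\G$-isotypic components, note that the commuting $\H$-action preserves them so indecomposability forces a single component, and then recover $V_2$ from $\End_\G(V)$ (the paper) or equivalently $\Hom_\G(V_1,V)$ (your version). Your write-up is slightly more explicit about the evaluation map and the algebraicity of the $\H$-action on $V_2$, but there is no substantive difference.
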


\begin{proof}
We first claim that $V$ is isoytopic as a representation of $\G$. Indeed, write $V=\oplus_{i=1}^m V^{(i)}$ where the $V^{(i)}$ are the isoytopic components of $V$. Then any $h\in \H$ gives an element of $\End_\G V$ and must therefore preserve the decomposition. Thus the $V^{(i)}$ are all $\G\times \H$ summands of $V$, and the claim follows since we assumed that $V$ is indecomposable.

Thus, there is an irreducible representation $V_1$ which is the only irreducible $\G$-subquotient of $V$, and since $\G$ is semisimple it follows that $V\cong V_1\otimes V_2$ as a $\G$-representation, with $\G$ acting trivially on $V_2$.  But now $\End_\G(V)\cong \End(V_2)$ and so the action of $\H$ is induced from an action of $\H$ on $V_2$. The claim follows.
    
\end{proof}

\begin{proof}[Proof of Theorem \ref{thm:intro main}]From the above discussion, algebraic de Rham cohomology is the canonical algebraic structure on $V$, and the periods $\int_{\gamma(u)}\alpha|_{X_u}$ for $\alpha$ an algebraic degree $k$ de Rham cohomology class and $\gamma(u)$ a flat degree $k$ cycle together generate $K(V)$.  The period domain coordinates are ratios of the periods, and $K(V)$ is differentially closed by \Cref{basic RH}, so we always have the claimed containment.  Theorem \ref{KV in Kphi} will then imply the result once we demonstrate that the conditions on the period map $\phi$ are met.  

The definability follows from \cite{bbkt}.  The algebraic group $\G$ can be taken to be the Zariski closure of the image of the monodromy representation of $V$.  Thus $\G$ is a normal subgroup of the derived group of the generic Mumford--Tate group \cite{andre}.  Moreover, the Zariski closure of the image of $\phi$ is a corresponding weak Mumford--Tate domain, namely a $\G$-orbit.

It remains to show that the action of $\G$ on the weak Mumford--Tate domain $\check D$ has finite kernel under the assumption in the theorem.  Let $\g$ be the Lie algebra of $\G$. 
 It is stable by the action of the generic Mumford--Tate group, hence underlies a sub-variation of Hodge structures of $\End(V)$. 
 The stabilizer of a point $p\in  \check D$ is $\exp(F^0\g)$, so the kernel $\H\subset\G$ of the action is the intersection of all conjugates of $\exp(F^0\g)$.  Its Lie algebra $\h$ is therefore an ideal $\h\subset \g$ which is contained in $F^0\g$.  The Lie algebra $\h$ is also a Hodge substructure (as it is again fixed by the generic Mumford--Tate group), and defined over $\bbR$, hence we must have $\h\cap W_{-1}\g=0$. 
 
 We claim that we have a  splitting $\g=\h\oplus \g'$ as Lie algebras and Hodge structures. Indeed, since $\gr_0^W\g$ is pure it must be semisimple, so we may write $\gr_0^W\g = \h \oplus \h'$ for some ideal $\h'$ which is also a hodge substructure. Now we may take $\g'$ to be the pre-image of $\h'$ in $\g$. 

 Moreover, since $\h$ is a pure Hodge structure of weight 0 contained in $F^0\g$ it must be Hodge--Tate, and is therefore unitary.  Therefore by \Cref{lem:tensor}, we contradict the assumption in the theorem unless $\h=0$ as desired.

\end{proof}

\subsection{Elliptic curves}
\subsubsection{Analytic presentation}
Take $S$ be the modular curve, so that $S^\an=\SL_2(\Z)\backslash \bbH$, and $f:X\to S$ to be the universal family of elliptic curves, so $X^\an=\Z^2\rtimes\SL_2(\Z)\backslash \C\times \bbH$, where the $\SL_2(\Z)$ action is 
$$\left(\begin{smallmatrix}
    a & b\\c & d
\end{smallmatrix}\right) \cdot (z,\tau) = \left(\frac{z}{c\tau+d},\frac{a\tau+b}{c\tau+d}\right).
$$

\subsubsection{Computing the 1-form}

On each fiber the form $dz$ is the unique regular 1-form up to scale, but this does not descend to $X^{\an}$. Instead, letting $\gamma = \left(\begin{smallmatrix}
    a & b\\c & d
\end{smallmatrix}\right)$ it satisfies $\gamma^*dz = \frac{dz}{c\tau+d}$. To remedy this, we use the $j$ function, which satisfies
$j'(\gamma\tau) = (c\tau+d)^2 j'(\tau)$, and so 
$j'(\gamma\tau)^{\frac12} = (c\tau+d) j'(\tau)^{\frac12}$, so the form  $\omega:=j'(\tau)^{\frac 12}dz$ does descend to $S^{\an}$.  (This is a multivalued function with finite monodromy so we must pass to a finite \'etale cover to obtain a regular 1-form, but we will take algebraic closures in the end so we will ignore this issue for now).
Moreover, this is a definable 1-form so by definable GAGA \cite{bbt} it gives an algebraic 1-form on $S$. The periods of $\omega$ are simply the integrals of $\omega$ from $0$ to $1$ and $0$ to $\tau$ so they are $j'(\tau)^{\frac12}$ and $\tau j'(\tau)^{\frac12}$. 

\subsubsection{Computing the two other periods}

Now there is another class in $H^1_{\dR}$ other than $[\omega]$. We could represent it by a meromorphic 1-form but this requires the elliptic $\wp$-function and integrating it is tricky. However, we can produce a new class by applying the connection $\nabla_{\partial/\partial j}[
\omega]$, and the resulting periods will be the derivatives of the periods of $[\omega]$. So we may simply differentiate the two periods we have so far to find two others which will span the 4-dimensional vector space of periods. Doing this gives

$$
R_1(\tau)=\frac{j''(\tau)}{j'(\tau)^{\frac 32}}
$$

$$
R_2(\tau) = \tau R_1(\tau) - \frac{2}{j'(\tau)^{\frac 12}}.
$$
One may in fact check that $\SL_2(\Z)$ acts on the vector space $\langle R_1,R_2\rangle$ giving the standard representation.
Finally, note that $j(\tau)$ is an algebraic co-ordinate on the modular curve.
Thus, the algebraic closure of the field of periods is $\ol{\C(\tau,j(\tau),j'(\tau),j''(\tau))}$. 

\subsubsection{Computing $K_\partial(\phi)$}

The period map here is simply the lift $\tilde{S^{\an}}\ra \bbH$ so the Grassmanian co-ordinate is $\tau$, and to compute $K_\partial(\phi)$ we have to differentiate with respect to $\tau$. Note that $\frac{d\tau}{dj} = j'(\tau)^{-1}$ and $\frac{dh}{dj}= \frac{dh}{d\tau}\frac{d\tau}{dj}$ it is from then on sufficient to differentiate with respect to $\tau$. Noting that $j'''$ is rational in $j,j',j''$ we conclude that $$K_\partial(\phi) = \C(\tau,j(\tau),j'(\tau),j''(\tau))$$ which has the same algebraic closure as the field of periods. 

\subsubsection{Necessity of the algebraic closure}

Working a bit more carefully, we may pass to an \'etale cover $Y$ of $S$ where $j'(\tau)^{\frac12}$ is holomorphic, and thus algebraic. Let $\C(Y)$ denote the field of rational functions on $Y$ which is a finite extension of $\C(j)$. Then by the above analysis the period field is 
$$\C(Y)(\tau,j'(\tau)^{\frac12},j''(\tau))$$ whereas
$$K_\partial(\phi) = \C(Y)(\tau,j'(\tau),j''(\tau)),$$ so we only have a containment of fields in general without taking closure.

\subsection{Interpreting the Riemann-Hilbert field through periods}\label{sect integration}In general it is somewhat complicated to describe the comparison between algebraic de Rham cohomology and singular cohomology in terms of integration.  In this section we describe some circumstances where there is a direct link.

The simplest example is provided by an affine family.

\begin{example}\label{affine eg}
Let $f:X\to S$ be a smooth family over a smooth affine base $S$ such that $V=R^kf_*\C_{X^\an}$ is a local system.  By general theory $V$ supports a variation of mixed Hodge structures.  If $X$ is affine then we have $R^kf_*\Omega^\bullet_{X/S}=f_*\mathscr{H}^k(\Omega_{X/S}^\bullet)=f_*Z^k_{X/S}/f_*B^k_{X/S}$ where
\begin{align*}
    Z^k_{Y/S}&=\ker\left(\Omega_{X/S}^k\xrightarrow{d}\Omega^{k+1}_{X/S}\right)\\
    B^k_{Y/S}&=\img\left(\Omega_{X/S}^{k-1}\xrightarrow{d}\Omega^{k}_{X/S}\right)
\end{align*}
since coherent sheaves have no higher cohomology.  Likewise for the analytic de Rham cohomology.  Thus, in this case the comparison
\[(R^kf_*\Omega_{X/S}^\bullet)^\an\to R^k(f^\an)_*\Omega_{X^\an/S^\an}^\bullet\to \cO_{S^\an}\otimes_{\C_{X^\an}}R^k(f^\an)_*\C_{X^\an}\]
is directly seen to be fiberwise integration, and $K(V)$ is generated by integrals of fiberwise closed $k$-forms on $X$ along $k$-cycles.  
\end{example}
\begin{example}
In fact, in the previous example, if we just assume $S$ is affine and $f$ is quasiprojective, by Jouanolou's trick \cite{Jouanolou} there is a vector bundle $E$ over $X$ and an $E$-torsor $Y$ whose total space is affine.  In particular the map $p:Y\to X$ is a homotopy equivalence upon analytification.  Thus, we have $R^kf_*\Omega_{X/S}^\bullet \cong R^kg_*\Omega^\bullet_{Y/S}$ where $g=f\circ p$, and so at the cost of changing the family the field $K(V)$ is still generated by integrals of closed algebraic $k$-forms along $k$-cycles.
\end{example}

\begin{example}\label{eg 1forms}In this example we explicitly describe the integration map on \v{C}ech cocycles computing algebraic de Rham cohomology in degree 1.  Let $X$ be a smooth algebraic variety.  The group $\mathbb{H}^1(X,\Omega_{X}^\bullet)$ can be described as follows:

Let $X_i$ be an affine cover of $X$.  Then $\mathbb{H}^1(X,\Omega^\bullet_X)$ is the quotient of the vector space generated by pairs $(f_{ij},\alpha_i)$ where $f_{ij}$ is a \v{C}ech 2-cocycle for $\cO_X$ and $\alpha_i$ is a \v Cech 1-cochain for $\Omega_X^1$ such that $df_{ij}=\alpha_i-\alpha_j$, modulo the vector subspace generated by pairs of the form $(f_i-f_j,0)$ for $f_i$ a \v Cech 1-cochain for $\cO_X$.  We claim that the integration of $(f_{ij},\alpha_i)$ along a 1-cycle $\gamma$ is obtained as follows:

We write $\gamma$ as a union of $1$-simplices (paths) $\bigcup_{i\in I} \gamma_i$ where each $\gamma_i$ is contained in a single $X_{b(i)}$. Pick an isomorphism $R:I\ra I$ such that $\gamma_i(0)=\gamma_{R(i)}(1)$. We then define the integral to be 
$$\sum_{i\in I}\left( \int_{\gamma_i}\alpha_{b(i)} + f_{iR(i)} (\gamma_i(0))\right).$$ 

It is easy to check that this is a well-defined map on cohomology, and that it is functorial. Since it is the correct map on $1$-forms (classes with all $f_{ij}=0)$ it follows that it is the correct map.

This can be generalized to higher dimensions, where one has to simplicially subdivide the integrating cycle and take the sum over all simplices of the integrals of the differential forms with certain rational coefficients.
\end{example}

\begin{example}
Let $C$ be a smooth proper curve of genus $g$, and let $U\subset C$ be an affine open subset, the complement of finitely many points.  Example \ref{affine eg} shows that every class in $H^1(U^\an,\C)$ is the smooth de Rham cohomology class of an algebraic $1$-form on $U$, and that the comparison $\int_C:\mathbb{H}^1(U,\Omega_{U/S}^\bullet)\to H^1(U^\an,\C)$ is given by integration in the usual sense. 

It is a classical fact that every class is represented by an algebraic $1$-form on $U$ with at worst logarithmic poles at $C\setminus C$.  We can also describe the Hodge filtration:  $F^pH^1(U^\an,\C)$ is $0$ for $p>1$, all of $H^1(U^\an,\C)$ if $p<1$, and $F^1H^1(U^\an,\C)$ is the subspace of de Rham cohomology classes of algebraic $1$-forms that extend regularly to $C$. 

The descriptions also hold in families. 

\end{example}

The final example can be generalized to higher dimensions.  Let $X$ be a smooth projective variety, $D$ an ample divisor, and $c=[D]\in H^2(X^\an,\C)$.  Recall that we have a Lefschetz decomposition:
\[H^k(X^\an,\Q)=\bigoplus_{0\leq j\leq \lfloor k/2\rfloor}c^{j}\cup H^{k-2j}(X^\an,\Q)_{\mathrm{prim}}\]
where $H^{n-k}(X^\an,\Q)_{\mathrm{prim}}:=\ker(c^{k+1}\cup\;\cdot\;)$.  

For a smooth projective family $f:X\to S$ with a relatively ample class $c$, we likewise have a direct sum decomposition of $R^k(f^\an)_*\Q_{X^\an}$, and the following proposition says that the Riemann--Hilbert field of the primitive part is generated by integrals of rational forms.

\begin{proposition}\label{ratl periods}Let $f:X\to S$ be a smooth projective family and let $D$ be a smooth relatively ample divisor of $X$ which is smooth over $S$.  Let $U=X\setminus D$ and $c=[D]\in H^0(S^\an,R^2(f^\an)_*\C_{X^\an})$.  Then for each $k\leq \dim X-\dim S$, the Riemann--Hilbert field of $V=(R^k(f^\an)_*\C_{X^\an})_{\mathrm{prim}}$ is generated by fiberwise integrals of algebraic $k$-forms on $U$ with at worst logarithmic poles along $D$ whose residue is exact. 
\end{proposition}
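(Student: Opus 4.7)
The plan is to identify the primitive local system $V$ with the image of the restriction map $R^k f^\an_*\C_{X^\an}\to W:=R^k(f|_U)^\an_*\C_{U^\an}$, realize this subbundle algebraically via the log de Rham complex, and interpret flat coordinates of algebraic sections of $V_{\cO_S}$ as integrals of log forms over cycles in $U$.

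For the first step I would use the relative Gysin long exact sequence
\[\cdots \to R^{k-2}(f|_D)^\an_*\C_{D^\an}(-1) \to R^k f^\an_*\C_{X^\an} \to W \to R^{k-1}(f|_D)^\an_*\C_{D^\an}(-1) \to \cdots.\]
The kernel of the middle arrow is the image of the Gysin map, and in the range $k\leq n:=\dim X-\dim S$ this equals $c\cup R^{k-2}f^\an_*\C_{X^\an}$, since the relative Lefschetz hyperplane theorem makes $R^{k-2}f^\an_*\C_{X^\an}\to R^{k-2}(f|_D)^\an_*\C_{D^\an}$ surjective. Combining with the Lefschetz decomposition $R^kf^\an_*\C_{X^\an}=V\oplus c\cup R^{k-2}f^\an_*\C_{X^\an}$ shows that restriction carries $V$ isomorphically onto a sub-local system of $W$.

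For the algebraic translation, Deligne's log de Rham theorem identifies the canonical algebraic structure on $W$ with $R^kf_*\Omega^\bullet_{X/S}(\log D)$, and the residue exact sequence of complexes
\[0\to \Omega^\bullet_{X/S}\to \Omega^\bullet_{X/S}(\log D)\xrightarrow{\mathrm{res}}i_*\Omega^{\bullet-1}_{D/S}\to 0\]
realizes the Gysin sequence algebraically. Thus a class in $W_{\cO_S}$ lies in the subbundle $V_{\cO_S}$ if and only if it is represented by a closed log form whose residue is fiberwise exact on $D$. In particular, every such log form $\omega$ defines an algebraic section of $V_{\cO_S}$, so to extract flat coordinates pick a flat basis $\{t_j\}$ of $V_x$ with dual basis $\{\gamma_j\}$; via $V_x\hookrightarrow H^k(U_x,\C)$ the $\gamma_j$ lift to cycles in $H_k(U_x,\C)$, which flat-continue to $\gamma_j(u)\in H_k(U_u,\C)$ using the smooth family $f|_U$. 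The flat coordinate of $[\omega]$ along $t_j$ is then the de Rham pairing $\int_{\gamma_j(u)}\omega|_{X_u}$ on the affine fiber $U_u$, so this integral lies in $K(V)$.

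The main obstacle I anticipate is the converse: one must check that such integrals generate all of $K(V)$ rather than a proper subfield, since an algebraic section of $V_{\cO_S}$ is in principle a hypercohomology class and need not be representable by a single global closed log form. I would handle this by working Zariski-locally around the basepoint: fiberwise at $x$, every primitive class of $H^k(X_x,\C)$ is represented by a closed log form with exact residue (by Example \ref{affine eg} applied to the affine variety $U_x$ together with Deligne's canonical log form representatives for $H^k(U_x,\C)$), and then coherent-sheaf semicontinuity spreads these fiberwise representatives to global closed log forms on $X|_{S^\circ}\to S^\circ$ for a sufficiently small affine Zariski open neighborhood $S^\circ\ni x$. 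This ensures that the germs of sections produced by such log forms span $V_{\cO_S,x}$ over $\cO_{S,x}\subset K(V)_x$, so the corresponding integrals generate all of $K(V)_x$.
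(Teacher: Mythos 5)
Your proposal is correct and follows essentially the same route as the paper's proof: the residue exact sequence for the relative log de Rham complex, the relative Lefschetz hyperplane theorem, and the identification of the image of the Gysin map with the imprimitive summand, so that the primitive part is realized as the kernel of the residue map. The one divergence is how you secure representability of algebraic sections by a single global closed form: the paper gets this for free because $U\to S$ is an affine morphism, so $R^k$ of the relative de Rham complex of $U/S$ is just $g_*Z^k_{U/S}/g_*B^k_{U/S}$ (the relative version of Example \ref{affine eg}), which is cleaner and less delicate than your fiberwise-representative-plus-semicontinuity spreading argument.
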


Proposition \ref{ratl periods} is fairly standard (see for example \cite{hodge1}), but we include the proof for completeness.

\begin{proof}Let $i:D\to X$ and $j:U\to X$ the inclusions, $g=f\circ j$, and $h=f\circ i$.  We have a short exact sequence of complexes 
$$0\ra \Omega_{X/S}^\bullet\ra\Omega_{X/S}^\bullet(\log D)\xrightarrow{\mathrm{Res}} i_*\Omega_{D/S}^\bullet[-1]\ra 0$$
whose analytification is canonically quasi-isomorphic to the exact triangle $f^{-1}\cO_{S^\an}\to Rj_*g^{-1}\cO_{S^\an}\to i_*i^!h^{-1}\cO_{S^\an}[1]\to f^{-1}\cO_{S^\an}[1]$.  The associated long exact sequence contains 
$$ R^{k-2}h_*\Omega^\bullet_{D/S}\ra R^kf_*\Omega_{X/S}^\bullet\ra R^kg_*\Omega^\bullet_{X/S}(\log D)\xrightarrow{\mathrm{Res}} R^{k-1}h_*\Omega^\bullet_{D/S}.$$ and the connecting homomorphism is identified with the pushforward map $i_*:H^{k-2}(D^\an,\C)\ra H^k(X^\an,\C)$.  By the Lefschetz hyperplane theorem, $R^kf_*\C_{X^\an}\to R^kf_*i_*\C_{D^\an}$ is an isomorphism for $k<\dim X-\dim S$.  Since the composition
\[R^{k-2}f_*\C_{X^\an}\xrightarrow{i^*}R^{k-2}f_*i_*\C_{D^\an}\xrightarrow{i_*} R^kf_*\C_{X^\an}\]
is cupping with $c$ by the projection formula, it follows that the image of $i_*$ is the imprimitive part of $R^kf_*\C_{X^\an}$, and therefore that the image of $R^kf_*\Omega^\bullet_{X/S}$ in $R^kg_*\Omega^\bullet_{U/S}$ analytifies to $\cO_{S^\an}\otimes_{\C_{S^\an}}V$.  On the other hand, $R^kg_*\Omega^\bullet_{U/S}=g_*Z_{X/S}^k/g_*B_{X/S}^k$ since $g$ is affine, and $\cO_{S^\an}\otimes_{\C_{S^\an}}V$ is therefore identified with the analytification of the subsheaf of $g_*Z_{U/S}^k/g_*B_{U/S}^k$ given by classes of regular forms on $U$ whose residues along $D$ are $0$ in $R^{k-1}h_*\Omega^\bullet_{D/S}$.
\end{proof}

\bibliography{biblio.quasi}
\bibliographystyle{plain}

\end{document}